\newtheorem{thm}{Theorem}[section]
\newtheorem{cor}[thm]{Corollary}
\newtheorem{lem}[thm]{Lemma}
\newtheorem{prop}[thm]{Proposition}
\theoremstyle{definition}
\newtheorem{defn}[thm]{Definition}
\theoremstyle{remark}
\newtheorem{exm}[thm]{Example}
\newtheorem{rem}[thm]{Remark}
\numberwithin{equation}{section}
\newcommand{\ds}{\displaystyle}
\newcommand{\A}{\mathcal{A}}
\newcommand{\Au}{\widetilde{\mathcal{A}}}
\newcommand{\B}{\mathcal{B}}
\newcommand{\eps}{\epsilon}
\newcommand{\R}{\mathbb{R}}
\newcommand{\N}{\mathbb{N}}
\newcommand{\C}{\mathbb{C}}
\newcommand{\bg}{B(L^2(G))}
\newcommand{\om}{\omega}
\newcommand{\sg} {\sigma}
\begin{document}

\title[Norm-controlled inversion in weighted convolution algebras]
{Norm-controlled inversion in weighted convolution algebras}

\author{Ebrahim Samei}
\address{Department of Mathematics and Statistics, University of Saskatchewan, Saskatoon, Saskatchewan, S7N 5E6, Canada}
\email{samei@math.usask.ca}

\author{Varvara Shepelska}
\address{Department of Mathematics and Statistics, University of Saskatchewan, Saskatoon, Saskatchewan, S7N 5E6, Canada}
\email{shepelska@gmail.com}

\footnote{{\it Date}: \today.

2010 {\it Mathematics Subject Classification.} 43A10, 43A15, 47A60.

{\it Key words and phrases.} Norm-controlled inversion, locally compact groups, convolution algebras, weights, groups of polynomial growth.

The first named author was partially supported by NSERC Grant no. 409364-2015. The second named author was partially supported by a PIMS Postdoctoral Fellowship at the University of Saskatchewan.}

\maketitle

\begin{abstract}
Let $G$ be a discrete group, let $p\ge1$, and let $\omega$ be a weight on $G$. Using the approach from~\cite{GK}, we provide sufficient conditions on a weight $\omega$ for $\ell^p(G,\omega)$ to be a Banach algebra admitting a norm-controlled inversion in the reduced C$^*$-algebra of $G$, namely $C^*_r(G)$. We show that our results can be applied to various cases including locally finite groups as well as finitely generated groups of polynomial or intermediate growth and a natural class of weights on them. These weights are of the form of polynomial or certain subexponential functions. We also consider the non-discrete case and study the existence of norm-controlled inversion in $\bg$ for some related convolution algebras.
\end{abstract}

\section{Introduction}\label{S:Introduction}

The study of the phenomenon of norm-controlled inversion was initiated by Nikolski in \cite{Nikolski}. Suppose that $\A$ is a commutative unital Banach algebra continuously embedded into the space $C(X)$ of continuous functions on a Hausdorff topological space $X$. For $0<\delta\le1$ he defines the majorant $c_1(\delta,\A,X)$ by
$$
c_1(\delta,\A,X)=\sup\{\|f^{-1}\|_{\A}:\, f\in\A,\,\delta\le|f(x)|\le\|f\|_{\A}\le1,\,x\in X\},
$$
where $\|f^{-1}\|_{\A}$ is assumed to be $\infty$ if $f$ is not invertible in $\A$. Nikolski says that $\delta$ allows the norm control of the inverse in $\A$ (with respect to $X$) if $c_1(\delta,\A,X)<\infty$. He further defines a critical constant $\delta_1(\A,X)$ by
$$
\delta_1(\A,X)=\inf\{0<\delta\le1:\,c_1(\delta,\A,X)<\infty\}.
$$
Then the equality $\delta_1(\A,X)=0$ is, obviously, equivalent to the existence of a norm-controlled inversion in $\A$ for every $\delta$. One of the main results in \cite{Nikolski} provides the estimates for $\delta_1(\A,X)$ and $c_1(\delta,\A,X)$ in case when $\A$  is $\ell^1(G)$ for a discrete abelian group $G$ or is the unitization $L^1(G)+\C\cdot e$ of the group algebra of a locally compact abelian non-discrete group $G$, and $X$ is the dual group $\hat{G}$. In particular, it is shown that in these cases $\delta_1(\A,\hat{G})\ge1/2$ if $G$ is infinite implying the lack of universal, i.e. independent of $\delta$, norm-controlled inversion. On the other hand, as was shown in \cite{E-FNZ}, there will be a universal norm-controlled inversion for certain weighted group algebras $\ell^p(\mathbb{Z},\omega)$.

In \cite{GK}, Gr\"{o}chenig and Klotz considered a phenomenon of norm-controlled inversion in more general settings. Let $\A\subseteq\B$ be two Banach algebras with a common unit. Recall that $\A$ is said to be inverse-closed in $\B$ if for every $a\in A$ the existence of $a^{-1}$ in $\B$ implies that $a^{-1}\in\A$. In this case, we say that $\A$ {\it admits norm-controlled inversion in $\B$} if there is a function $h:\R^+\times \R^+\to \R^+$ such that
\[\label{eq:norm control}
\|a^{-1}\|_{\A}\le h(\|a\|_{\A},\|a^{-1}\|_{\B}).
\]
Because of the nature of the algebra $C(X)$, for $f\in C(X)$ the relation $|f(x)|\ge\delta$ is equivalent to $\|f^{-1}\|_{C(X)}\le1/\delta$. Hence, we can rewrite the definition of Nikolski's majorant $c_1(\delta,\A,X)$ as
$$
c_1(\delta,\A,X)=\sup\{\|f^{-1}\|_{\A}:\, f\in\A,\,\|f\|_{\A}\le1,\,\|f^{-1}\|_{C(X)}\le1/\delta\}.
$$
It is then easy to see that $\A$ admits norm-controlled inversion in $C(X)$ in the sense of Gr\"{o}chenig and Klotz if and only if $\delta_1(\A,X)=0$, i.e. $\A$ admits universal norm-controlled inversion in the sense of Nikolski. It is proved in \cite{GK} that if $\B$ is a $C^*$-algebra and $\A$ is a differential *-subalgebra of $\B$, then $\A$ admits norm-controlled inversion in $\B$.

The study of a general norm-controlled inversion was inspired by results in applied mathematics and non-commutative geometry where inversion preserves the smoothness of elements in certain Banach algebras and the desire to control the smoothness norm of the inverses. For example, it is known that if an infinite matrix possesses certain off-diagonal decay and is invertible as an operator on $\ell^2$, then the inverse matrix has the same kind of off-diagonal decay (see \cite{Baskakov}, \cite{GL}, \cite{Jaffard}). Refining the Jaffard's theorem from \cite{Jaffard}, Gr\"{o}chenig and Klotz prove in \cite{GK2} that the subalgebra of $B(\ell^2)$ of infinite matrices with off-diagonal decay of order $r>1$ admits norm-controlled inversion in $B(\ell^2)$.

In this paper, we use the approach from \cite{GK} to continue the exploration of Nikolski and study the phenomenon of norm-controlled inversion for non-commutative weighted group algebras.

The paper is organized as follows. In Section \ref{S:Norm control-diff subalg}, we present the proof of the existence of norm-controlled inversion in a subalgebra with a modified differential norm inside a $C^*$-algebra. Section \ref{S:Diff subalg-weight Lp spaces} is devoted to building a technical base for proving that the algebras of interest are modified differential subalgebras in the corresponding C$^*$-algebras. In Section \ref{S:Norm control-weighted Lp sapce-discrete groups} we focus on the discrete case. Here we provide sufficient conditions on a weight $\omega$ for $\ell^p(G,\omega)$ to admit a norm-controlled inversion in $C^*_r(G)$ and apply these results to finitely generated groups of polynomial growth or intermediate growth and a natural class of weights on them, including polynomial and certain subexponential weights. In Section \ref{S:Norm control-weighted Lp sapce-Non discrete groups}, we show that in the non-discrete case the same conditions are sufficient for the unitization of an algebra $L^p(G,\omega)\cap L^2(G)$ to admit a norm-controlled inversion in $\bg$. Finally, in the Appendix we present the proof of a technical result from Section \ref{S:Norm control-diff subalg} providing an asymptotic form of a norm-controlling function.

\section{Norm-controlled inversion for subalgebras with a modified differential norm in $C^*$-algebras}\label{S:Norm control-diff subalg}

As was already mentioned in Section \ref{S:Introduction}, one of the main results in \cite{GK} asserts the existence of norm-controlled inversion in a differential *-subalgebra of a $C^*$-algebra.

\begin{thm}[{\cite[Theorem~1.1(i)]{GK}}]\label{original main result}
Let $\B$ be a $C^*$-algebra and $\A\subset\B$ be a Banach *-algebra with the same unit. Assume that $\A$ is a differential *-subalgebra of $\B$, i.e. there is $C>0$ such that
\begin{align}\label{differential norm-general}
\|ab\|_{\A}\le C(\|a\|_{\A}\|b\|_{\B}+\|b\|_{\A}\|a\|_{\B}),\quad a,b\in\A.
\end{align}
Then $\A$ admits norm-controlled inversion in $\B$.
\end{thm}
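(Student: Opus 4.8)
The plan is to reduce the general case to that of a positive self-adjoint element, and then to invert such an element by a Neumann series whose $\A$-norm is controlled by a power estimate extracted from \eqref{differential norm-general}. First I would record the elementary consequences of \eqref{differential norm-general}: since $\A$ is continuously embedded in $\B$ we may assume $\|x\|_{\B}\le\|x\|_{\A}$, and since the involution on $\A$ is isometric we have $\|a^*\|_{\A}=\|a\|_{\A}$. Given $a\in\A$ invertible in $\B$, set $b=a^*a$, which is positive and self-adjoint, with $b^{-1}=a^{-1}(a^*)^{-1}$ invertible in $\B$ and $a^{-1}=b^{-1}a^*$. Using \eqref{differential norm-general} and the $C^*$-identity in $\B$ one gets $\|b\|_{\A}\le 2C\|a\|_{\A}\|a\|_{\B}\le 2C\|a\|_{\A}^2$, $\|b\|_{\B}=\|a\|_{\B}^2\le\|a\|_{\A}^2$, and $\|b^{-1}\|_{\B}\le\|a^{-1}\|_{\B}^2$. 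Thus a bound $\|b^{-1}\|_{\A}\le F(\|b\|_{\A},\|b\|_{\B},\|b^{-1}\|_{\B})$ combined with $\|a^{-1}\|_{\A}=\|b^{-1}a^*\|_{\A}\le\|b^{-1}\|_{\A}\|a\|_{\A}$ yields the desired $h(\|a\|_{\A},\|a^{-1}\|_{\B})$. So it suffices to norm-control the inverse of a positive self-adjoint $b$.

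The technical heart, which I expect to be the main obstacle, is a polynomial power estimate for self-adjoint elements. Setting $d_n=\|b^n\|_{\A}/\|b\|_{\B}^n$ and applying \eqref{differential norm-general} to $b^n\cdot b^m$ together with $\|b^k\|_{\B}\le\|b\|_{\B}^k$ gives the quasi-subadditive recursion
\[
d_{n+m}\le C\,(d_n+d_m).
\]
Iterating along dyadic blocks yields $d_{2^k}\le (2C)^k d_1$, and a standard binary-expansion argument upgrades this to $d_n\le C_1\,n^{\theta}\,d_1$ with $\theta=1+\log_2 C$. Translating back, this is the estimate
\[
\|b^n\|_{\A}\le C_1\,n^{\theta}\,\|b\|_{\B}^{\,n-1}\,\|b\|_{\A},\qquad n\ge1,
\]
valid for any self-adjoint element. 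The delicate point here is keeping the constants $C_1,\theta$ absolute (depending only on $C$), so that they enter the final $h$ as fixed parameters rather than as quantities depending on $b$.

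With the power estimate in hand I would invert $b$ by a degree-zero preconditioned Neumann series, exploiting that $\A$ is already a Banach algebra so that no higher-order polynomial approximation is needed. Writing $\Lambda=\|b\|_{\B}$ and $\delta=1/\|b^{-1}\|_{\B}$, the spectrum of $b$ lies in $[\delta,\Lambda]$; choosing $\alpha=2/(\delta+\Lambda)$, the self-adjoint element $u=1-\alpha b$ satisfies $\|u\|_{\B}=r_0:=(\kappa-1)/(\kappa+1)<1$ with $\kappa=\Lambda/\delta=\|b\|_{\B}\|b^{-1}\|_{\B}$. Then $b^{-1}=\alpha\sum_{j\ge0}u^{j}$ converges in $\B$, and applying the power estimate to the self-adjoint $u$ gives $\|u^{j}\|_{\A}\le C_1 j^{\theta} r_0^{\,j-1}\|u\|_{\A}$, so that the series converges in $\A$ as well. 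This simultaneously proves $b^{-1}\in\A$ and yields
\[
\|b^{-1}\|_{\A}\le \alpha\|1\|_{\A}+\alpha\,C_1\,\|u\|_{\A}\sum_{j\ge1} j^{\theta} r_0^{\,j-1},
\]
where $\|u\|_{\A}\le\|1\|_{\A}+\alpha\|b\|_{\A}$ and the sum is a finite function of $r_0$ (hence of $\kappa$) and $\theta$. Since $\alpha$, $r_0$, and $\|u\|_{\A}$ are expressed through $\|b\|_{\A}$, $\|b\|_{\B}$, and $\|b^{-1}\|_{\B}$, this is the required $F$. Substituting $b=a^*a$ and using the reductions of the first paragraph then produces the explicit norm-controlling function $h(\|a\|_{\A},\|a^{-1}\|_{\B})$, completing the proof.
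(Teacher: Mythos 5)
Your argument is correct. Note first that the paper does not actually prove this statement: Theorem~\ref{original main result} is quoted from \cite{GK}, and what the paper proves in detail is Proposition~\ref{norm control general}, the analogue under the weaker ``modified differential norm'' hypothesis \eqref{power inequality}. Measured against that proof, your skeleton is the same --- pass to the positive element $a^*a$, invert it by a Neumann series, and control the $\A$-norms of the powers by a dyadic argument --- but your key lemma is genuinely different and exploits the full strength of \eqref{differential norm-general}. Because \eqref{differential norm-general} controls arbitrary products (not just squares), you get the quasi-subadditive recursion $d_{n+m}\le C(d_n+d_m)$ for $d_n=\|b^n\|_{\A}/\|b\|_{\B}^n$, hence a \emph{polynomial} power estimate $\|u^j\|_{\A}\le C_1 j^{\theta}\|u\|_{\B}^{j-1}\|u\|_{\A}$, and the Neumann series then converges geometrically with a controlling function of polynomial type in $\nu(a)=\|a\|_{\A}\|a^{-1}\|_{\B}$. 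Under \eqref{power inequality} only squares can be doubled, which forces the paper into the estimate $\beta_{2^k}\le C\beta_{2^{k-1}}^{1+\theta}$, the binary-expansion bound on $\|c^n\|_{\A}$, and ultimately the infinite-product/sub-exponential controlling function of \eqref{norm estimate} and Proposition~\ref{formula}; your recursion is precisely what is unavailable there. Two small points: your claimed exponent $\theta=1+\log_2 C$ comes out most cleanly from repeated halving ($d_n\le 2C\max\{d_{\lceil n/2\rceil},d_{\lfloor n/2\rfloor}\}$, giving $d_n\le(2C)^{\lceil\log_2 n\rceil}d_1$); the literal binary-expansion bookkeeping gives a slightly larger exponent, though this is immaterial. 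And your preconditioning $u=1-\alpha b$ with $\alpha=2/(\delta+\Lambda)$, giving $\|u\|_{\B}=(\kappa-1)/(\kappa+1)$, is a cosmetic improvement over the paper's normalization $c=e-a^*a/\|a^*a\|_{\B}$, for which $\|c\|_{\B}=1-1/\kappa$; both are legitimately expressed through $\|a\|_{\A}$ and $\|a^{-1}\|_{\B}$ using $\|a\|_{\B}\le\|a\|_{\A}$ and $\|a\|_{\B}^{-1}\le\|a^{-1}\|_{\B}$.
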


The precise formula for the controlling function $h$ is given in \cite[Theorem~3.3]{GK}. It was also mentioned in \cite[Section 5]{GK} that the method presented can be adopted to prove an analogue of Theorem~\ref{original main result} for subalgebras with a {\em modified differential norm} satisfying
\begin{align}\label{power inequality}
\|a^2\|_A\le C\|a\|_{A}^{1+\theta}\|a\|_{\B}^{1-\theta},\quad a\in\A,
\end{align}
where constant $C>0$ and exponent $0<\theta<1$ are fixed. In fact, the authors noted that their proof is a modification of the one of \cite[Theorem~1.1]{Sun}, where a specific case of $\B=B(\ell^2(\mathbb{Z}^d))$ and its subalgebra $Q_{p,\alpha}$ of matrices with polynomial off-diagonal decay that had a modified differential norm with respect to $B(\ell^2(\mathbb{Z}^d))$ was considered.

For the sake of completeness and for the purpose of further modification in Section \ref{S:Norm control-weighted Lp sapce-Non discrete groups}, we present a proof of the analogue of \cite[Theorem~3.3]{GK} for modified differential norm which is an adaptation of the corresponding proof from \cite{GK}.

\begin{prop}\label{norm control general}
Let $\mathcal{B}$ be a $C^*$-algebra and $\mathcal{A}\subset\mathcal{B}$ be a Banach $*$-algebra with the same unit satisfying~\eqref{power inequality} for some $C>0$ and $0<\theta<1$. Then $\A$ is inverse-closed in $\B$ and whenever $a\in \A$ is invertible, we have
\begin{equation}\label{norm estimate}
\|a^{-1}\|_{\A}\le\frac{\|a\|_{\A}}{\|a\|_{\B}^2}\,\prod_{k=0}^{\infty}\left(1+\left(2\frac{\|a\|_{\A}^2}{\|a\|_{\B}^2}\right)^ {(1+\theta)^k}C^{\frac{(1+\theta)^k-1}{\theta}}\left(1-\frac{1}{\|a\|_{\B}^2\|a^{-1}\|_{\B}^2}\right)^{2^k-(1+\theta)^k}\right).
\end{equation}
\end{prop}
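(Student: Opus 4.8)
The plan is to follow the Gr\"ochenig--Klotz/Sun iteration scheme, reducing first to a positive element. Under the usual normalizations $\|1\|_{\A}=1$, $\|a^*\|_{\A}=\|a\|_{\A}$, and contractivity of the inclusion (so $\|\cdot\|_{\B}\le\|\cdot\|_{\A}$ on $\A$), as in \cite{GK}, I would set $b=a^*a$. This lies in $\A$, is positive and invertible in $\B$, and by the $C^*$-identity satisfies $\|b\|_{\B}=\|a\|_{\B}^2$ and $\|b^{-1}\|_{\B}=\|a^{-1}\|_{\B}^2$. I would then pass to
\[
c=1-\frac{b}{\|a\|_{\B}^2}\in\A,
\]
which is self-adjoint (indeed positive) with $\sigma_{\B}(c)\subseteq[0,r]$, where
\[
r:=\|c\|_{\B}=1-\frac{1}{\|a\|_{\B}^2\|a^{-1}\|_{\B}^2}<1,
\]
using that for a self-adjoint element the norm equals the spectral radius. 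A direct estimate gives $\|c\|_{\A}\le 1+\|a\|_{\A}^2/\|a\|_{\B}^2\le 2\|a\|_{\A}^2/\|a\|_{\B}^2$, which will account for the base factor in~\eqref{norm estimate}.

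The heart of the argument is to control the dyadic powers $D_n:=\|c^{2^n}\|_{\A}$. Applying the power inequality~\eqref{power inequality} to $c^{2^n}$ and using self-adjointness, so that $\|c^{2^n}\|_{\B}=\|c\|_{\B}^{2^n}=r^{2^n}$, yields the recursion
\[
D_{n+1}=\|(c^{2^n})^2\|_{\A}\le C\,D_n^{1+\theta}\,\big(r^{2^n}\big)^{1-\theta},\qquad D_0=\|c\|_{\A}.
\]
I would solve this closed recursion by induction on $n$ (separating the homogeneous growth $(1+\theta)^n$ from the forcing terms via geometric sums), obtaining
\[
D_n\le (\|c\|_{\A})^{(1+\theta)^n}\,C^{\frac{(1+\theta)^n-1}{\theta}}\,r^{\,2^n-(1+\theta)^n}.
\]
Substituting $\|c\|_{\A}\le 2\|a\|_{\A}^2/\|a\|_{\B}^2$ reproduces exactly the general factor of the product in~\eqref{norm estimate}.

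Finally, I would assemble the inverse through the binary product identity
\[
(1-c)^{-1}=\sum_{k=0}^{\infty}c^k=\prod_{n=0}^{\infty}\big(1+c^{2^n}\big),
\]
valid in $\B$ since $r<1$. Because $\theta<1$ forces $1+\theta<2$, the exponent $2^n-(1+\theta)^n\to\infty$ and the factor $r^{\,2^n-(1+\theta)^n}$ decays doubly exponentially, dominating the at-most-$\exp(O((1+\theta)^n))$ growth of the other two factors; hence $\sum_n D_n<\infty$, so the product converges in $\A$. By continuity of the embedding its $\A$-limit must coincide with $(1-c)^{-1}$, which therefore lies in $\A$; as $b=\|a\|_{\B}^2(1-c)$ and $a^{-1}=b^{-1}a^*$, this proves inverse-closedness. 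Submultiplicativity with $\|1+c^{2^n}\|_{\A}\le 1+D_n$ then gives $\|a^{-1}\|_{\A}\le\|a\|_{\A}\|b^{-1}\|_{\A}\le \|a\|_{\B}^{-2}\|a\|_{\A}\prod_n(1+D_n)$, which is precisely~\eqref{norm estimate}.

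I expect the main obstacle to be twofold. First, solving the nonlinear recursion so that the three exponents $(1+\theta)^n$, $((1+\theta)^n-1)/\theta$, and $2^n-(1+\theta)^n$ emerge \emph{exactly} as stated; the bookkeeping of the geometric sums is the delicate point. Second, justifying that the infinite product converges in the \emph{smaller} algebra $\A$ rather than merely in $\B$, which is what simultaneously delivers inverse-closedness and the quantitative bound. The self-adjoint reduction $b=a^*a$ is essential precisely because it makes $\|c^{2^n}\|_{\B}=r^{2^n}$ available; for a non-normal element no such clean control of the $\B$-norms of the powers would hold.
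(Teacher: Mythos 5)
Your proposal is correct and follows essentially the same route as the paper: reduce to the positive element $b=a^*a$ (normalized), set $c=e-b/\|a^*a\|_{\B}$, iterate the power inequality along dyadic powers to get exactly the bound $\|c^{2^k}\|_{\A}\le \|c\|_{\A}^{(1+\theta)^k}C^{((1+\theta)^k-1)/\theta}\|c\|_{\B}^{2^k-(1+\theta)^k}$, and assemble $(e-c)^{-1}$ from the Neumann series. The only cosmetic difference is that you sum the series via the factorization $\prod_{n}(1+c^{2^n})$ while the paper bounds each $\|c^n\|_{\A}$ through the dyadic expansion of $n$ and recognizes the resulting sum as the same infinite product, and you obtain inverse-closedness constructively rather than via the paper's separate spectral-radius argument with \cite[Lemma 3.1]{FGL1}; these are equivalent in substance.
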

%
\begin{proof}
Let $c\in\A$ and $n\in\mathbb{N}$. Substituting $a=c^n$ into \eqref{power inequality} we obtain
$$
\|c^{2n}\|_{\A}\le C\|c^n\|_{\A}^{1+\theta}\|c^n\|_{\B}^{1-\theta}.
$$
Taking the $n$-th root and $\lim_{n\to\infty}$ of both sides, we get the following inequality of the spectral radii $\rho_{\A}(c)$ and $\rho_{\B}(c)$
$$
(\rho_{\A}(c))^2\le(\rho_{\A}(c))^{1+\theta}(\rho_{\B}(c))^{1-\theta},
$$
which implies that $\rho_{\A}(c)\le\rho_{\B}(c)$. Since the reverse inequality follows from the inclusion $\A\subset\B$, we have that $\rho_{\A}(c)=\rho_{\B}(c)$ for every $c\in\A$. It follows from \cite[Lemma 3.1]{FGL1} that
$\A$ is inverse-closed in $\B$.

We now prove the norm estimate \eqref{norm estimate}. Applying \eqref{power inequality} to $a=c^{2^{k-1}}$, $c\in\A$, $k\in\mathbb{N}$, we get
\begin{align}\label{eq:estimate}
\|c^{2^k}\|_{\A}\le C\|c^{2^{k-1}}\|_{\A}^{1+\theta}\|c^{2^{k-1}}\|_{\B}^{1-\theta}\le C\|c^{2^{k-1}}\|_{\A}^{1+\theta}\left(\|c\|_{\B}^{2^{k-1}}\right)^{1-\theta}.
\end{align}
If we let $\beta_n=\frac{\|c^n\|_{\A}}{\|c\|_{\B}^n}$, $n\in\mathbb{N}$, then \eqref{eq:estimate} implies that
$$
\beta_{2^k}\le C\beta_{2^{k-1}}^{1+\theta},\quad k\in\mathbb{N}.
$$
Using induction in $k$, we obtain
$$
\beta_{2^k}\le C^\frac{(1+\theta)^k-1}{\theta}\beta_1^{(1+\theta)^k},
$$
and, going back to $c$, we get
\begin{equation}
\|c^{2^k}\|_{\A}\le\left(\frac{\|c\|_{\A}}{\|c\|_{\B}}\right)^{(1+\theta)^k}C^\frac{(1+\theta)^k-1}{\theta}\|c\|_{\B}^{2^k},\quad k\in\N.
\end{equation}
Now take $n\in\N\cup\{0\}$ and consider its dyadic expansion $n=\sum\limits_{k=0}^{\infty} \epsilon_k2^k$. Then
$$
\|c^n\|_{\A}=\left\|\prod_{k=0}^{\infty} (c^{2^k})^{\eps_k}\right\|_{\A}\le \prod_{k=0}^{\infty} \|c^{2^k}\|_{\A}^{\eps_k}\le \prod_{k=0}^{\infty}\left(\left(\frac{\|c\|_{\A}}{\|c\|_{\B}}\right)^{(1+\theta)^k}C^\frac{(1+\theta)^k-1}{\theta}\|c\|_{\B}^{2^k}\right)^{\eps_k}.
$$
Hence, if $\mathcal{F}$ denotes the set of all sequences $\eps=\{\eps_k\}\in\{0,1\}^{\N}$ that contain finitely many $1$-s, then
\begin{eqnarray}\label{series estimate}
\sum\limits_{n=0}^{\infty} \|c^n\|_{\A}&\le \sum\limits_{\eps\in\mathcal{F}} \prod\limits_{k=0}^{\infty}\left(\left(\frac{\|c\|_{\A}}{\|c\|_{\B}}\right)^{(1+\theta)^k} C^\frac{(1+\theta)^k-1}{\theta}\|c\|_{\B}^{2^k}\right)^{\eps_k} \\\nonumber&=\prod\limits_{k=0}^{\infty}\left(1+\left(\frac{\|c\|_{\A}}{\|c\|_{\B}}\right)^{(1+\theta)^k} C^\frac{(1+\theta)^k-1}{\theta}\|c\|_{\B}^{2^k}\right).
\end{eqnarray}
The last infinite product is convergent if and only if
$$
\sum_{k=0}^\infty \left(\frac{\|c\|_{\A}}{\|c\|_{\B}}\right)^{(1+\theta)^k} C^\frac{(1+\theta)^k-1}{\theta}\|c\|_{\B}^{2^k}<\infty,
$$
and since $\theta<1$, it is easy to see that this happens exactly when $\|c\|_{\B}<1$.

Now assume that $a\in\A$ is invertible in $\B$ and set $b = a^*a/\|a^*a\|_{\B}$. Then $b$ is hermitian,
invertible, $\|b\|_{\B} = 1$, and the spectrum $\sigma_{\B}(b)$ is contained in $(0, 1]$. Hence, for the spectrum of an element $c=e-b$ we have $\sigma_{\B}(c)\subseteq[0,1-\epsilon]\subseteq[0,1)$, and, in particular, $\|c\|_{\B}=1-\epsilon<1$. This implies that
$$
b^{-1}=\sum\limits_{n=0}^{\infty}(e-b)^n=\sum\limits_{n=0}^{\infty}c^n,
$$
with convergence in $\B$. Then since
$$
a^{-1}=\frac{b^{-1}a^*}{\|a^*a\|_{\B}},
$$
we can use the above representation of $b^{-1}$ together with \eqref{series estimate} to obtain the following.
\begin{align}\label{first norm estimate}
\|a^{-1}\|_{\A}&\le\frac{\|a^*\|_{\A}}{\|a^*a\|_{\B}}\left(\sum\limits_{n=0}^{\infty} \|c^n\|_{\A}\right)\\\nonumber &\le\frac{\|a^*\|_{\A}}{\|a^*a\|_{\B}}\prod\limits_{k=0}^{\infty}\left(1+\left(\frac{\|c\|_{\A}}{\|c\|_{\B}}\right)^{(1+\theta)^k} (2C)^\frac{(1+\theta)^k-1}{\theta}\|c\|_{\B}^{2^k}\right).
\end{align}
Finally, we estimate the norms of $c = e - a^*a/\|a^*a\|_{\B}$ in $\A$ and $\B$ directly by the norms of $a$ and $a^{-1}$.
First,
\begin{align}\label{cA}
\|c\|_{\A}\le1+\frac{\|a^*a\|_{\A}}{\|a^*a\|_{\B}}\le2\frac{\|a\|_{\A}^2}{\|a\|_{\B}^2}.
\end{align}
On the other hand, since $\B$ is a $C^*$-algebra and $a^*a$ is positive, we have  $$\|(a^*a)^{-1}\|_{\B}^{-1}=\min\{\lambda:\lambda\in\sigma(a^*a)\}=\lambda_{\mathrm{min}},$$ and so
\begin{align}\label{cB}
\|c\|_{\B}=1-\frac{\lambda_{\mathrm{min}}}{\|a^*a\|_{\B}}=1-\frac{1}{\|(a^*a)^{-1}\|_{\B}\|a^*a\|_{\B}} =1-\frac1{\|a^{-1}\|_{\B}^2\|a\|_{\B}^2}.
\end{align}
Combining \eqref{cA} and \eqref{cB} with \eqref{first norm estimate}, we precisely obtain \eqref{norm estimate}.
\end{proof}

As in \cite{GK}, we can further work with \eqref{norm estimate} to obtain a simpler norm-controlling function and study its asymptotic behavior.

\begin{prop}\label{formula}
Let $\mathcal{B}$ be a $C^*$-algebra and $\mathcal{A}\subset\mathcal{B}$ be a Banach $*$-algebra with the same unit satisfying~\eqref{power inequality} for some $C>0$ and $0<\theta<1$. For every invertible $a\in \A$, denote
$$
\nu(a)=\|a\|_{\A}\|a^{-1}\|_{\B},
$$
and let $\gamma=\log_2{(1+\theta)}$.
Then there exist constants $C_1$ and $C_2$ (depending on $\theta$) such that whenever $\nu(a)\ge2$, we have that
$$
\|a^{-1}\|_{\A}\le C_1\|a\|_{\A}\|a^{-1}\|_{\B}^2\,e^{C_2\nu(a)^{\frac{2\gamma}{1-\gamma}}(\ln\nu(a))^{\frac{2-\gamma}{1-\gamma}}}.
$$
\end{prop}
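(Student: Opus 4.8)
The plan is to start from the explicit product bound \eqref{norm estimate} of Proposition~\ref{norm control general} and estimate its logarithm. Both the asserted inequality and the right-hand side of \eqref{norm estimate} are invariant under the rescaling $a\mapsto\lambda a$ ($\lambda>0$): indeed $\nu(a)$ is scale-invariant, while $\|a^{-1}\|_\A$ and $\|a\|_\A\|a^{-1}\|_\B^2$ both scale by $\lambda^{-1}$. So I would first normalize $\|a\|_\B=1$ and set $X=\|a\|_\A\ge1$, $Y=\|a^{-1}\|_\B\ge1$, so that $\nu:=\nu(a)=XY\ge2$. Writing the $k$-th factor of the product in \eqref{norm estimate} as $1+a_k$, a short computation using $(1+\theta)^k=2^{\gamma k}$ gives $\ln a_k=A(1+\theta)^k-s\,2^k-\theta^{-1}\ln C$, where $A=\ln(2X^2)+\theta^{-1}\ln C+s$ and $s=-\ln(1-Y^{-2})>0$. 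The whole statement then reduces to showing $\ln P\le C_2\,\nu^{2\gamma/(1-\gamma)}(\ln\nu)^{(2-\gamma)/(1-\gamma)}+O(1)$, where $P=\prod_{k\ge0}(1+a_k)$; the prefactor $\|a^{-1}\|_\B^2\ge1$ in the target is then harmless slack (take $C_1\ge1$).

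The analytic heart is a one-variable maximization. Substituting $w=2^k$ (so $(1+\theta)^k=w^\gamma$) turns $\ln a_k$ into $g(w)=Aw^\gamma-sw-\theta^{-1}\ln C$, which is unimodal in $w\ge1$. I would compute the interior maximizer $w^*=(A\gamma/s)^{1/(1-\gamma)}$ and the peak value $g(w^*)=(1-\gamma)\gamma^{\gamma/(1-\gamma)}A^{1/(1-\gamma)}s^{-\gamma/(1-\gamma)}-\theta^{-1}\ln C$. Using the a priori bounds $X\le\nu$ and $s\ge Y^{-2}=X^2/\nu^2\ge\nu^{-2}$, one checks that in the main regime $w^*\ge1$ one automatically has $s\le\frac{\gamma}{1-\gamma}(\ln(2X^2)+\theta^{-1}\ln C)\lesssim\ln\nu$, hence $A\lesssim\ln\nu$, and therefore the per-term estimate $g(w^*)\lesssim\nu^{2\gamma/(1-\gamma)}(\ln\nu)^{1/(1-\gamma)}$. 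The degenerate regime $w^*<1$ occurs only when $Y$ is extremely close to $1$ (so $s$ is large); there $a_k$ is decreasing with $a_0=2X^2\le2\nu^2$, whence $\ln P\lesssim\ln\nu$ and the claim is immediate.

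Next I would convert the peak bound into a bound on the full sum by splitting according to $a_k\ge1$ or $a_k<1$. The set $\{k:a_k\ge1\}$ lies in an interval of $k$'s of length $\lesssim\log_2 w_+\lesssim\frac{1}{1-\gamma}\log_2(A/s)\lesssim\ln\nu$, where $w_+=(A/s)^{1/(1-\gamma)}$ is the upper root of $g$ and I used $A\lesssim\ln\nu$, $s\ge\nu^{-2}$. On this set I bound $\ln(1+a_k)\le\ln2+g(w^*)$; multiplying the $O(\ln\nu)$ such terms by the peak $O(\nu^{2\gamma/(1-\gamma)}(\ln\nu)^{1/(1-\gamma)})$ produces exactly $\nu^{2\gamma/(1-\gamma)}(\ln\nu)^{(2-\gamma)/(1-\gamma)}$, since $1+\frac{1}{1-\gamma}=\frac{2-\gamma}{1-\gamma}$. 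For the remaining indices I use $\ln(1+a_k)\le a_k$ and verify that the finitely many small-$k$ terms below the lower root, and the large-$k$ tail (where for $k\ge k_+$ the ratios satisfy $\ln(a_{k+1}/a_k)\le-(1-\theta)A(1+\theta)^k$, so $a_k$ decays super-geometrically with first ratio $\approx e^{(\theta-1)sw_+}<1$), both sum to $O(1)$. Combining and absorbing the $O(1)$ using $\nu\ge2$ gives $\ln P\le C_2\,\nu^{2\gamma/(1-\gamma)}(\ln\nu)^{(2-\gamma)/(1-\gamma)}$; feeding this into \eqref{norm estimate} and undoing the normalization yields the statement.

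I expect the main obstacle to be the bookkeeping in the third step: turning $\sum_k\ln(1+a_k)$ into ``(number of significant terms) times (peak value)'' with the precise power $(2-\gamma)/(1-\gamma)$ on $\ln\nu$, while simultaneously (i) controlling $A$ and $s$ uniformly over the admissible range $1\le X\le\nu$, $Y\ge1$; (ii) confirming that the off-peak tails contribute only $O(1)$; and (iii) cleanly isolating the degenerate case $w^*<1$. The optimization itself is routine calculus; the delicate point is that the extra logarithmic factor \emph{beyond} the peak value comes entirely from the width $\sim\ln\nu$ of the band of indices $k$ with $a_k\ge1$, and that this band must be shown to have the right length uniformly.
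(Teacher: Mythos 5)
Your proposal is correct and follows essentially the same route as the paper's Appendix: bound each factor of the infinite product in \eqref{norm estimate} by the peak value obtained from optimizing $(1+\theta)^k\ln u+2^k\ln v$, count the $O(\ln\nu)$ indices up to the zero $k_0$ of that exponent, show the tail past $k_0$ decays super-geometrically and contributes $O(1)$, and then combine $A\lesssim\ln\nu$ with $s\gtrsim\nu^{-2}$ to extract the exponents $\tfrac{2\gamma}{1-\gamma}$ and $\tfrac{2-\gamma}{1-\gamma}$. The only differences are cosmetic (normalizing $\|a\|_{\B}=1$ and treating the degenerate case $w^*<1$ explicitly, which the paper handles implicitly).
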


Because the proof is rather technical and is far from the main subject of the present paper, we present it in the Appendix.

\section{Modified differential norm for a certain subspace of $L^p(G,\omega)$ in $C^*_r(G)$}\label{S:Diff subalg-weight Lp spaces}

Let $G$ be a locally compact group. A measurable function $\om:G\to [0,\infty)$  is called a {\it weight} on $G$, if $\om(e)=1$, $\om(x)=\om(x^{-1})$ for all $x\in G$, and $\om$ is submultiplicative, i.e.
$$
\om(xy)\le\om(x)\om(y),\quad x,y\in G.
$$
Let $1\le p<\infty$. As usual, the weighted Banach space $L^p(G,\om)$ is defined by
$$
L^p(G,\om)=\left\{f\,:\,f\om\in L^p(G)\ \text{and}\ \|f\|_{p,\om}=\|f\om\|_p\right\}.
$$
It is known that $L^p(G,\om)$ becomes a Banach algebra with respect to the convolution under certain assumptions on the weight $\om$, see \cite{K1} and \cite{OS1}. We are going to use the following weighted modification of a particular case of \cite[Theorem~3.3]{OS1}.
\begin{thm}\label{Lp algebra}
Let $G$ be a locally compact group, let $1\le p<\infty$, and let $\om$ be a weight on $G$. Furthermore, let $1<q\le\infty$ be the index conjugate to $p$, i.e. $1/p+1/q=1$, and suppose that there exists a function $u\in L^q(G)$ such that
$$
\frac{\om(xy)}{\om(x)\om(y)}\le u(x)+u(y),\quad x,y\in G.
$$
Then $L^p(G,\om)$ is a Banach algebra with respect to the convolution product. Moreover, if we let $\sigma=\om u$, then $L^p(G,\om)\subseteq L^1(G,\sigma)$ and
\begin{align}\label{differential norm1}
 \|f\ast g\|_{p,\om}\le\|f\|_{1,\sigma}\|g\|_{p,\om}+\|f\|_{p,\om}\|g\|_{1,\sigma},\quad f,g\in L^p(G,\om).
\end{align}
\end{thm}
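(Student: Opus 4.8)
The plan is to prove the differential-norm inequality \eqref{differential norm1} first, by a pointwise estimate on the convolution, and then to deduce both the inclusion $L^p(G,\om)\subseteq L^1(G,\sigma)$ and the Banach-algebra property as easy consequences. Throughout I would set $\widetilde{h}=|h|\,\om$, so that $\|\widetilde{h}\|_p=\|h\|_{p,\om}$ and, since $\sigma=\om u$ (and we may assume $u\ge0$, replacing $u$ by $|u|\in L^q(G)$ if necessary), $\|u\widetilde{h}\|_1=\|h\|_{1,\sigma}$.

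The heart of the argument is a pointwise bound. For $x\in G$, writing $x=y\cdot(y^{-1}x)$ and invoking the hypothesis on the weight,
\begin{align*}
|(f\ast g)(x)|\,\om(x)&\le\int_G|f(y)|\,|g(y^{-1}x)|\,\om(x)\,dy\\
&\le\int_G|f(y)|\om(y)\,|g(y^{-1}x)|\om(y^{-1}x)\,\big(u(y)+u(y^{-1}x)\big)\,dy.
\end{align*}
Splitting the factor $u(y)+u(y^{-1}x)$ and recognizing the two resulting integrals as convolutions of nonnegative functions gives
$$
|(f\ast g)(x)|\,\om(x)\le\big((u\widetilde{f})\ast\widetilde{g}\big)(x)+\big(\widetilde{f}\ast(u\widetilde{g})\big)(x).
$$
Now I would take $\|\cdot\|_p$ of both sides, apply the triangle inequality, and estimate each term by Young's convolution inequality: $\|(u\widetilde{f})\ast\widetilde{g}\|_p\le\|u\widetilde{f}\|_1\|\widetilde{g}\|_p=\|f\|_{1,\sigma}\|g\|_{p,\om}$ for the first term, and $\|\widetilde{f}\ast(u\widetilde{g})\|_p\le\|\widetilde{f}\|_p\|u\widetilde{g}\|_1=\|f\|_{p,\om}\|g\|_{1,\sigma}$ for the second. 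Summing these yields exactly \eqref{differential norm1}.

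For the remaining claims, the inclusion is a one-line application of H\"older's inequality with $1/p+1/q=1$:
$$
\|f\|_{1,\sigma}=\int_G|f|\,\om\,u\le\big\||f|\om\big\|_p\,\|u\|_q=\|u\|_q\,\|f\|_{p,\om},
$$
so $L^p(G,\om)\subseteq L^1(G,\sigma)$ continuously (with $\|u\|_q<\infty$ since $u\in L^q(G)$). Feeding this back into \eqref{differential norm1} gives $\|f\ast g\|_{p,\om}\le2\|u\|_q\|f\|_{p,\om}\|g\|_{p,\om}$, so $\|\cdot\|_{p,\om}$ is submultiplicative up to the constant $2\|u\|_q$ and $L^p(G,\om)$ is a Banach algebra after an equivalent renorming, consistent with \cite[Theorem~3.3]{OS1}.

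The delicate point is the second application of Young's inequality, in the orientation $\|\widetilde{f}\ast(u\widetilde{g})\|_p\le\|\widetilde{f}\|_p\|u\widetilde{g}\|_1$. The isometry of left translation on $L^p$ combined with the Minkowski integral inequality delivers the first-term bound immediately, but the same device applied to the second term produces $\|\widetilde{f}\|_1\|u\widetilde{g}\|_p$ rather than the required $\|\widetilde{f}\|_p\|u\widetilde{g}\|_1$; obtaining the latter is the $L^p\ast L^1\to L^p$ estimate, which on a general locally compact group carries a modular-function correction and is clean precisely when $G$ is unimodular. Since all the examples treated later (discrete groups, groups of polynomial or intermediate growth, locally finite groups) are unimodular, this causes no difficulty, but I would make the unimodularity explicit at this step, or otherwise lean on the convolution estimates already established in \cite{OS1}.
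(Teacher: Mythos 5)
The paper does not actually prove Theorem~\ref{Lp algebra}: it is stated as a ``weighted modification of a particular case of [OS1, Theorem~3.3]'' and used as a black box, so there is no in-paper argument to compare against. Your self-contained proof is the standard one and is essentially correct: writing $x=y\cdot(y^{-1}x)$ and using $\om(xy)\le\om(x)\om(y)(u(x)+u(y))$ gives the pointwise domination $|(f\ast g)|\,\om\le (u\widetilde f)\ast\widetilde g+\widetilde f\ast(u\widetilde g)$, and the two Young estimates, together with the one-line H\"older bound $\|f\|_{1,\sigma}\le\|u\|_q\|f\|_{p,\om}$, yield \eqref{differential norm1}, the inclusion $L^p(G,\om)\subseteq L^1(G,\sigma)$, and submultiplicativity up to the constant $2\|u\|_q$. (Two small remarks: the positivity of $u$ is automatic, since taking $y=x$ in the hypothesis gives $2u(x)\ge\om(x^2)/\om(x)^2>0$, so no replacement by $|u|$ is needed; and the renorming remark is fine but unnecessary for the way the paper uses the result, since only the two-sided bound \eqref{differential norm1} is ever invoked.) Your caveat about the second Young inequality is a genuine and worthwhile catch rather than a defect of your argument: on a non-unimodular group one only has $\|h_1\ast h_2\|_p\le\|h_1\|_p\,\|\Delta^{-1/q}h_2\|_1$, since right translation scales the $L^p$ norm by $\Delta(w)^{-1/p}$, so the estimate $\|\widetilde f\ast(u\widetilde g)\|_p\le\|\widetilde f\|_p\|u\widetilde g\|_1$ as written requires unimodularity (or absorbing $\Delta^{-1/q}$ into $\sigma$). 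This does not affect anything downstream in the paper --- discrete groups, locally finite groups, and compactly generated groups of polynomial or intermediate growth are all unimodular --- but the hypothesis deserves to be made explicit exactly where you place it.
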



In the case when the conditions of Theorem~\ref{Lp algebra} are satisfied and $\sigma$ is bounded away from zero, we will have the following inclusions
$$
L^p(G,\om)\subseteq L^1(G,\sigma)\subseteq L^1(G)\subset C^*_r(G),
$$
where $C^*_r(G)$ stands for the reduced $C^*$-algebra of $G$.
This enables us to raise the question about the existence of norm-controlled inversion of $L^p(G,\om)$ in $C^*_r(G)$. Keeping in mind the result of Proposition \ref{norm control general}, we seek to find conditions under which a modified differential norm relation \eqref{power inequality} holds for $\A=L^p(G,\om)$ and $\B=C^*_r(G)$. The following theorem provides us with such conditions and in Section \ref{S:Norm control-weighted Lp sapce-discrete groups} we will demonstrate how it can be applied to various cases to obtain norm-controlled inversion of $\ell^p(G,\om)$ in $C^*_r(G)$.



\begin{thm}\label{T:diff norm-weighted Lp}
Let $G$ be a locally compact group, let $\om$ be a weight on $G$, and let $1\le p<\infty$ and $q$ the index conjugate to $p$. Suppose that there exists a bounded measurable function $u:G\to\R^+$,  $s<q$, and $r>0$ such that $\sigma=\om u$ is bounded away from zero,
\begin{align}\label{algebra condition}
\frac{\om(xy)}{\om(x)\om(y)}\le u(x)+u(y),\quad x,y\in G,\quad\text{and}
\end{align}
\begin{align}\label{u_integral condition}
\int\limits_G u(x)^s\om(x)^{r}dx<\infty.
\end{align}
Then there exist $0<\theta<1$ and $C>0$ such that
for every $f\in L^p(G,\om)\cap L^2(G)$
\begin{align}\label{Eq:weighted Lp diff formula}
\|f\ast f\|_{p,\omega}\le C\|f\|_{p,\omega}^{1+\theta}\|f\|_2^{1-\theta}.
\end{align}
\end{thm}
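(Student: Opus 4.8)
The plan is to convert the submultiplicativity of $\om$ together with \eqref{algebra condition} into a pointwise convolution estimate, reduce \eqref{Eq:weighted Lp diff formula} to a single weighted interpolation inequality, and then prove that inequality by a three-factor Hölder estimate whose last factor is exactly what \eqref{u_integral condition} controls.

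First I would fix $f\in L^p(G,\om)\cap L^2(G)$, set $g=|f|$, and estimate pointwise. Applying \eqref{algebra condition} to the pair $(y,\,y^{-1}x)$ gives $\om(x)\le\om(y)\om(y^{-1}x)\bigl(u(y)+u(y^{-1}x)\bigr)$, so that, with $\sigma=\om u$,
\[
|f\ast f(x)|\,\om(x)\le\bigl[(g\sigma)\ast(g\om)\bigr](x)+\bigl[(g\om)\ast(g\sigma)\bigr](x),
\]
all convolutions being of nonnegative functions. Taking $\|\cdot\|_p$ and using Young's inequality in the two forms $\|h\ast k\|_p\le\|h\|_1\|k\|_p$ and $\|h\ast k\|_p\le\|h\|_p\|k\|_1$ (and noting $\|g\sigma\|_1=\|f\|_{1,\sigma}$, $\|g\om\|_p=\|f\|_{p,\om}$) yields $\|f\ast f\|_{p,\om}\le 2\|f\|_{1,\sigma}\|f\|_{p,\om}$. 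This merely reproduces the differential-norm bound already behind Theorem~\ref{Lp algebra}; the new point is to trade the factor $\|f\|_{1,\sigma}$ for a geometric mean of $\|f\|_{p,\om}$ and $\|f\|_2$.

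Thus the whole theorem reduces to finding $\theta\in(0,1)$ and $C>0$ with
\[
\|f\|_{1,\sigma}\le C\,\|f\|_{p,\om}^{\theta}\,\|f\|_2^{1-\theta},
\]
since multiplying through by $\|f\|_{p,\om}$ then gives \eqref{Eq:weighted Lp diff formula}. To prove this weighted interpolation inequality I would factor the integrand as $|f|\sigma=|f|\om u=(|f|\om)^{\theta}\,|f|^{1-\theta}\,(\om^{1-\theta}u)$ and apply Hölder's inequality with three exponents $e_1=p/\theta$, $e_2=2/(1-\theta)$, and $e_3$ determined by $e_3^{-1}=1-\theta/p-(1-\theta)/2$. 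The first factor contributes $\|f\|_{p,\om}^{\theta}$, the second contributes $\|f\|_2^{1-\theta}$, and the third contributes $\bigl(\int_G(\om^{1-\theta}u)^{e_3}\bigr)^{1/e_3}$.

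The heart of the argument, and the step I expect to be the main obstacle, is choosing $\theta$ so that this last factor is finite using only \eqref{u_integral condition} with its prescribed parameters $s<q$ and $r>0$. Here I would use that, for $p>1$, one has $e_3\to q$ and $(1-\theta)e_3\to0$ as $\theta\to1^-$ (recall $q$ is conjugate to $p$), so that for $\theta$ sufficiently close to $1$ one obtains $s<e_3<q$ together with $(1-\theta)e_3\le r$. Since $u$ is bounded, $u^{e_3}\le\max(1,\|u\|_\infty^{\,e_3-s})\,u^{s}$, and since $\om\ge1$ (from $\om(e)=1$ and submultiplicativity) we have $\om^{(1-\theta)e_3}\le\om^{r}$; together these bound the third factor by a constant multiple of $\bigl(\int_G u^{s}\om^{r}\bigr)^{1/e_3}<\infty$. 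The delicate content is exactly this reconciliation of the three Hölder exponents, which are forced once we demand that the two outer factors be precisely $\|f\|_{p,\om}^{\theta}$ and $\|f\|_2^{1-\theta}$, with the two free parameters $s$ and $r$ of \eqref{u_integral condition}; the hypotheses $s<q$, $r>0$, the boundedness of $u$, and the inequality $\om\ge1$ all enter at this single point, and the admissible range of $\theta$ must be extracted by the corresponding bookkeeping.
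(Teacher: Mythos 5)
Your proof follows essentially the same route as the paper's: it reduces, via the differential-norm bound $\|f\ast f\|_{p,\om}\le 2\|f\|_{1,\sigma}\|f\|_{p,\om}$ (which the paper imports from Theorem~\ref{Lp algebra} rather than re-deriving pointwise), to the interpolation inequality $\|f\|_{1,\sigma}\le C\|f\|_{p,\om}^{\theta}\|f\|_2^{1-\theta}$, and proves the latter by the identical three-factor H\"{o}lder split with the third exponent tending to $q$ as $\theta\to1^-$. The exponent bookkeeping, the use of $\om\ge1$ and the boundedness of $u$, and the role of \eqref{u_integral condition} all coincide with the paper's argument.
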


\begin{proof}
From \eqref{differential norm1} it follows that
\begin{align}\label{differential norm1-app}
\|f\ast f\|_{p,\om}\le 2\|f\|_{p,\om}\|f\|_{1,\sigma},\quad f\in L^p(G,\om).
\end{align}
Hence it suffices to show that there exist $0<\theta<1$ and $C>0$ such that
\begin{align}\label{Eq:general pytlik}
\|f\|_{1,\sigma}\le C\|f\|_2^{1-\theta}\|f\|_{p,\om}^{\theta},\quad f\in L^p(G,\om)\cap L^2(G).
\end{align}
For a fixed $f\in L^p(G,\om)\cap L^2(G)$ and $0<\theta<1$ we have
\begin{align*}
\|f\|_{1,\sigma}&=\int\limits_G\,f(x)\sigma(x)\,dx=\int\limits_G\,f(x)\om(x)u(x)\,dx =\int\limits_G\,f(x)^{1-\theta}\big(f(x)\om(x)\big)^{\theta} \big(\om(x)^{1-\theta}u(x)\big)\,dx.
\end{align*}
We now want to apply the generalized H\"{o}lder's inequality with exponents $\frac2{1-\theta}$, $\frac{p}{\theta}$, and $\alpha>1$, determined from the relation
$$
\frac{1-\theta}2+\frac{\theta}{p}+\frac1{\alpha}=1,
$$
to the functions $f(x)^{1-\theta}$, $\big(f(x)\om(x)\big)^{\theta}$, and $\om(x)^{1-\theta}u(x)$. Simple calculations show that $$\alpha=\ds\frac{2p}{p+p\theta-2\theta}\to\frac{p}{p-1}=q \ \ \text{as}\ \  \theta\to1,$$ and we obtain
\begin{align*}
\|f\|_{1,\sigma}\le\left(\int\limits_G\, f(x)^2\,dx\right)^{\frac{1-\theta}2}\left(\int\limits_G\, \big(f(x)\om(x)\big)^p\,dx\right)^{\frac{\theta}p}\left(\int\limits_G\, \big(\om(x)^{1-\theta}u(x)\big)^\alpha\,dx\right)^{\frac1{\alpha}}.
\end{align*}
Hence, to prove \eqref{Eq:general pytlik}, it would be enough to choose $\theta$ so that
$$
\left(\int\limits_G\, \big(\om(x)^{1-\theta}u(x)\big)^\alpha\,dx\right)^{\frac1{\alpha}}= \left(\int\limits_G\, u(x)^\alpha\om(x)^{(1-\theta)\alpha}\,dx\right)^{\frac1{\alpha}} =C<\infty.
$$
Since $\alpha\to q$ as $\theta\to1$ and $s<q$, we can choose $\theta$ so that $s<\alpha$ and $(1-\theta)\alpha<r$. Then the convergence of the above integral will follow directly from \eqref{u_integral condition} since $\om\ge1$ and $u$ is bounded.
\end{proof}

In the rest of this section, we will discuss several situations when Theorem \ref{T:diff norm-weighted Lp} can be applied. The first one is when the weight $\om$ is weakly subadditive, i.e. there exists $D>0$ such that
$$
\om(xy)\le D(\om(x)+\om(y)),\quad x,y\in G.
$$
In this case,
$$
\frac{\om(xy)}{\om(x)\om(y)}\le\frac D{\om(x)}+\frac D{\om(y)},\quad x,y\in G,
$$
and so we can take $u=D/\om$. Condition \eqref{u_integral condition} then becomes
$$
\int\limits_G \frac{D^s}{\om(x)^{s-r}}dx<\infty
$$
for some $s<q$ and $r>0$, which is equivalent to $\om^{-1}\in L^{\tilde{s}}(G)$ for some $\tilde{s}<q$. Hence we obtain the following corollary of Theorem \ref{T:diff norm-weighted Lp}. Note that for the case when $p=1$, the result was obtained in \cite[Lemmas 1 and 2]{Pytlik} but the method used there is different from ours.

\begin{cor}\label{norm control weakly subadditive}
Let $G$ be a locally compact group, let $\omega$ be a weakly subadditive weight on $G$, and let $1\le p<\infty$  and $q$ the index conjugate to $p$. Suppose that $\omega^{-1}\in L^s(G)$ for some $s<q$. Then there are $0<\theta <1$ and $C>0$ such that for every $f\in L^p(G,\om)\cap L^2(G)$, the relation \eqref{Eq:weighted Lp diff formula} holds.
\end{cor}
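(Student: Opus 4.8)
The plan is to deduce the corollary directly from Theorem~\ref{T:diff norm-weighted Lp} by exhibiting an appropriate function $u$ and checking its hypotheses one at a time. Weak subadditivity of $\om$ furnishes a constant $D>0$ with $\om(xy)\le D(\om(x)+\om(y))$; dividing through by $\om(x)\om(y)$ yields $\frac{\om(xy)}{\om(x)\om(y)}\le \frac{D}{\om(x)}+\frac{D}{\om(y)}$, which is precisely condition~\eqref{algebra condition} for the choice $u=D/\om$. So I would take $u=D/\om$ and verify that the remaining structural requirements of Theorem~\ref{T:diff norm-weighted Lp} hold for it.

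First I would record that $\om\ge1$ everywhere: submultiplicativity and symmetry give $1=\om(e)\le\om(x)\om(x^{-1})=\om(x)^2$. Consequently $u=D/\om\le D$ is a bounded measurable function with values in $\R^+$, and $\sigma=\om u\equiv D$ is a positive constant and hence bounded away from zero. This settles every hypothesis of Theorem~\ref{T:diff norm-weighted Lp} except the integrability condition~\eqref{u_integral condition}.

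The only genuine work is matching the exponents in~\eqref{u_integral condition}. With $u=D/\om$ that condition reads
\[
\int_G u(x)^{s}\om(x)^{r}\,dx=D^{s}\int_G \om(x)^{r-s}\,dx<\infty
\]
for some $s<q$ and $r>0$, i.e. $\om^{-1}\in L^{s-r}(G)$. To see this follows from the hypothesis $\om^{-1}\in L^{s_0}(G)$ for some $s_0<q$ (I relabel the corollary's exponent as $s_0$ to avoid a clash with the $s$ of Theorem~\ref{T:diff norm-weighted Lp}), I would pick a small $r>0$ with $s_0+r<q$ and set $s=s_0+r$. Then $s<q$, $r>0$, and $s-r=s_0$, so $\int_G \om^{r-s}=\int_G\om^{-s_0}=\|\om^{-1}\|_{s_0}^{s_0}<\infty$, and~\eqref{u_integral condition} holds.

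With all hypotheses of Theorem~\ref{T:diff norm-weighted Lp} verified for $u=D/\om$, that theorem produces constants $0<\theta<1$ and $C>0$ for which~\eqref{Eq:weighted Lp diff formula} holds for every $f\in L^p(G,\om)\cap L^2(G)$, which completes the proof. The main---and essentially only---obstacle is the exponent bookkeeping just described; everything else is a direct substitution into the theorem, so I expect the argument to be brief.
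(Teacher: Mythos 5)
Your proposal is correct and follows essentially the same route as the paper: the paper also deduces the corollary from Theorem~\ref{T:diff norm-weighted Lp} by taking $u=D/\om$ and observing that condition~\eqref{u_integral condition} reduces to $\om^{-1}\in L^{s-r}(G)$, which is equivalent to the stated hypothesis. Your explicit choice $s=s_0+r$ with $r$ small just fills in the exponent bookkeeping that the paper states without detail.
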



\begin{exm}\label{E:locally finite-diff Lp norm relation}
Let $G$ be a locally compact group for which there is an increasing sequence  $\{G_i\}_{i\in \N}$ of compact subgroups of $G$ such that $G:=\cup_{i\in \N} G_i$.
Take an increasing sequence $\{n_i\}_{i\in \N}\in [1,\infty)$. Define
$\om:G \to [1,\infty)$ by
\begin{align}\label{Eq:weight-locally finite}
\om=1+\sum_{i=1} n_i 1_{G_{i+1}\setminus G_{i}}
\end{align}
It is easy to see that
$$\om(st)=\max\{ \om(s),\om(t) \},\quad s,t\in G.$$
This, in particular, implies that $\om$ is a weakly subadditive weight on $G$. Moreover, for any $s>0$, we can pick $\{n_i\}$ in such a way that $\om^{-1} \in L^s(G)$. Thus Corollary \ref{norm control weakly subadditive} can be applied to these classes of weights.
\end{exm}

Now we turn to the case of a compactly generated group $G$. Let $U$ be a compact symmetric generating neighborhood of the identity in $G$.
We define the {\it length function} $\tau_U : G \to [0, \infty)$ by
\begin{align*}
\tau_U(x)=\inf \{n\in \N\, :\, x\in U^n \} \ \ \text{for} \ \ x \neq e, \ \ \tau_U(e)=0.
\end{align*}
When there is no fear of ambiguity, we write $\tau$ instead of $\tau_U$. This function can be used to construct many classes of weights on~$G$. In fact, if $\rho:\N\cup \{0\}\to \R^+$ is an increasing concave function with $\rho(0)=0$ and $\rho(n)\to \infty$ as $n\to\infty$, then
\begin{align}\label{Eq:weight-lenght function}
\om(x)=e^{\rho(\tau(x))}, \quad x\in G,
\end{align}
is a weight on $G$. For instance, for every $0< \alpha < 1$, $\beta > 0$, $\gamma >0$, and $C>0$,
we can define the {\it polynomial weight} $\om_\beta$ on $G$ of order $\beta$ by
\begin{align}\label{Eq:poly weight-defn}
\om_\beta(x)=(1+\tau(x))^\beta, \quad x\in G,
\end{align}
and the {\it subexponential weights} $\sg_{\alpha, C}$ and $\nu_{\beta,C}$ on $G$ by
\begin{align}\label{Eq:Expo weight-defn}
\sg_{\alpha,C}(x)=e^{C\tau(x)^\alpha}, \quad x\in G,
\end{align}
\begin{align}\label{Eq:Expo weight II-defn}
\nu_{\gamma,C}(x)=e^\frac{C\tau(x)}{(\ln (1+\tau(x)))^\gamma}, \quad x\in G.
\end{align}

We are particularly interested in compactly generated groups of polynomial or intermediate (subexponential) growths.
Recall that $G$ has {\it polynomial growth} if there exist $C>0$ and $d\in \N$ such that for every $n\in \N$
	$$\lambda(U^n)\leq Cn^d, \quad n\in \N,$$
where $\lambda$ is the Haar measure on $G$ and
	$$U^n=\{u_1\cdots u_n\, :\, u_i\in U,\ i=1,\ldots, n \}.$$
The smallest such $d$ is called {\it the order of growth} of $G$ and is denoted by~$d(G)$.
It can be shown that the order of growth of $G$ does not depend on the choice of symmetric generating set $U$, i.e. it is a universal constant for $G$. It can happen that a group does not have polynomial growth but $\lambda(U^n)$ grows slower than any exponential function of $n$. In this case, we say that $G$ has an {\it intermediate} growth. We refer the interested reader to \cite{BE1}, \cite{BE2}, \cite{Los}, \cite{Grig}, and \cite{FGL1} for more details on these classes of groups.

In the following theorem, we present assumptions on the function $\rho$ under which all conditions of Theorem \ref{T:diff norm-weighted Lp} will hold for the weight $\om$ defined in \eqref{Eq:weight-lenght function}.

\begin{thm}\label{T:diff weighted LP relation-non-weakly subadditive}
Let $G$ be a compactly generated group, let $\tau$ be a length function on $G$, and let $\om$ be a weight of the form $\om(x)=e^{\rho(\tau(x))}$, where $\rho:\N\cup \{0\}\to \R^+$ is an increasing concave function with $\rho(0)=0$ and $\rho(n)\to \infty$ as $n\to\infty$. Furthermore, let $u:G\to\R^+$  be defined by
\begin{align}\label{Eq:subexpo weight decreasing quotient-2}
u(x)=e^{[\rho(2\tau(x))-2\rho(\tau(x))]}, \quad x\in G.
\end{align}
Suppose that
\begin{align}\label{rho: growth condition}
\ds\limsup_{n\to\infty}\left\{\frac{\rho(2n)}{\rho(n)}\right\}<2\quad\text{and}\quad u\in L^s(G)
\end{align}
for some $0<s<q$. Then there are $0<\theta <1$ and $C>0$ such that for every $f\in L^p(G,\om)\cap L^2(G)$, the relation \eqref{Eq:weighted Lp diff formula} holds.
\end{thm}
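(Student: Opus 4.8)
The plan is to deduce this from Theorem~\ref{T:diff norm-weighted Lp} by checking its four hypotheses for the given $u(x)=e^{\rho(2\tau(x))-2\rho(\tau(x))}$: that $u$ is bounded and measurable, that $\sigma=\om u$ is bounded away from zero, that the pointwise inequality~\eqref{algebra condition} holds, and that the integrability~\eqref{u_integral condition} holds for suitable exponents. Two of these are immediate from the concavity and monotonicity of $\rho$. Since $\rho$ is concave with $\rho(0)=0$, it is subadditive, so $\rho(2t)\le 2\rho(t)$ and hence $u=e^{\rho(2\tau)-2\rho(\tau)}\le 1$ is bounded; measurability of $u$ follows from that of $\tau$. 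Since $\rho$ is increasing, $\sigma=\om u=e^{\rho(2\tau)-\rho(\tau)}\ge 1$, so $\sigma$ is bounded away from zero.

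For~\eqref{algebra condition} I would use the \emph{decreasing increments} form of concavity: for $a\le b$ and $h\ge0$ one has $\rho(b+h)-\rho(b)\le\rho(a+h)-\rho(a)$. Fix $x,y$ and write $m=\tau(x)$, $n=\tau(y)$, assuming $m\le n$ without loss of generality. Since $\tau$ is subadditive and $\rho$ is increasing, $\rho(\tau(xy))\le\rho(m+n)$, so it suffices to bound $e^{\rho(m+n)-\rho(m)-\rho(n)}$. Applying the decreasing-increments inequality with $a=m$, $b=n$, $h=m$ gives $\rho(m+n)-\rho(n)\le\rho(2m)-\rho(m)$, whence $\rho(m+n)-\rho(m)-\rho(n)\le\rho(2m)-2\rho(m)$. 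Therefore $\frac{\om(xy)}{\om(x)\om(y)}\le e^{\rho(2m)-2\rho(m)}=u(x)\le u(x)+u(y)$, so in fact the factor corresponding to the smaller length already does the job.

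The main work — and the step where both conditions in~\eqref{rho: growth condition} enter — is the integrability~\eqref{u_integral condition}. Here I would not use the given exponent $s$ itself: multiplying $u^{s}$ by the growing factor $\om^{r}$ destroys integrability, and the bound $\rho(2n)/\rho(n)<2$ works against a direct comparison at exponent $s$. Instead I would spend the strict gap $s<q$ and pick $\tilde s$ with $s<\tilde s<q$. By $\limsup_{n\to\infty}\rho(2n)/\rho(n)<2$ there are $\kappa<2$ and $N$ with $\rho(2n)\le\kappa\rho(n)$ for all $n\ge N$. Comparing exponents, on $\{\tau\ge N\}$ one has $u^{\tilde s}\om^{r}\le u^{s}$ provided $(\tilde s-s)(\rho(2\tau)-2\rho(\tau))+r\rho(\tau)\le0$; using $\rho(2\tau)\le\kappa\rho(\tau)$ together with $\tilde s-s>0$, this quantity is at most $[(\tilde s-s)(\kappa-2)+r]\rho(\tau)$, which is $\le0$ once $0<r\le(\tilde s-s)(2-\kappa)$. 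With such $\tilde s$ and $r$, the tail $\int_{\{\tau\ge N\}}u^{\tilde s}\om^{r}$ is dominated by $\int_G u^{s}<\infty$, while $\{\tau<N\}=U^{N-1}$ is relatively compact, so $u$ and $\om$ are bounded there and it contributes a finite amount. Hence~\eqref{u_integral condition} holds with the exponents $\tilde s<q$ and $r>0$.

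With all four hypotheses verified, Theorem~\ref{T:diff norm-weighted Lp} supplies $0<\theta<1$ and $C>0$ for which~\eqref{Eq:weighted Lp diff formula} holds for every $f\in L^p(G,\om)\cap L^2(G)$. The only genuinely delicate point is the integrability step: one must raise the exponent using the slack $s<q$ and then calibrate $r$ against the growth gap $2-\kappa$ so that the decay of $u^{\tilde s}$ beats the growth of $\om^{r}$. The concavity estimates used for boundedness and for~\eqref{algebra condition} are routine by comparison.
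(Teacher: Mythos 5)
Your proposal is correct and follows essentially the same route as the paper: you verify the four hypotheses of Theorem~\ref{T:diff norm-weighted Lp}, with the boundedness of $u$ and the lower bound on $\sigma$ coming from concavity and monotonicity of $\rho$, and the integrability~\eqref{u_integral condition} established by exactly the paper's device of raising the exponent to some $\tilde s\in(s,q)$ and calibrating $r$ against the gap $2-\limsup\rho(2n)/\rho(n)$. The only difference is that you prove~\eqref{algebra condition} directly via the decreasing-increments property of concave functions (correctly), where the paper simply cites \cite[Theorem~2.2]{OSS2}; this makes your argument slightly more self-contained but does not change the approach.
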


\begin{proof}
Since $\rho$ is concave and $\rho(0)=0$, we have that $\rho(2n)\le2\rho(n)$, $n\in\N\cup\{0\}$, and hence $\ds u(x)=e^{[\rho(2\tau(x)-2\rho(\tau(x)))]}\le1$ for every $x\in G$. Also, because $\rho$ is increasing, we have that
$$
\sigma(x)=\om(x)u(x)=e^{\rho(\tau(x))}e^{[\rho(2\tau(x)-2\rho(\tau(x)))]}=e^{[\rho(2\tau(x)-\rho(\tau(x))]}\ge1,\quad x\in G.
$$
Moreover, the very same proof given in \cite[Theorem~2.2]{OSS2} shows that $u$ satisfied \eqref{algebra condition}.
Hence it is only left to verify that \eqref{u_integral condition} holds. Since $u\in L^s(G)$, it suffices to show that there exist $\tilde{s}<q$, $r>0$, and $N\in\mathbb{N}$ such that
$$
u(x)^{\tilde{s}}\om(x)^{r}\le u(x)^s,\quad x\in G\setminus U^N.
$$
Rewriting the above inequality in terms of $\rho$ (using \eqref{Eq:subexpo weight decreasing quotient-2}) and taking natural logarithm of both sides, we get
$$
\tilde{s}(\rho(2\tau(x))-2\rho(\tau(x)))+r\rho(\tau(x))\le s(\rho(2\tau(x))-2\rho(\tau(x))),\quad x\in G\setminus U^N.
$$
Replacing $\tau(x)$ with $n$ for simplicity and rearranging the terms we obtain
$$
(\tilde{s}-s)\rho(2n)\le(2\tilde{s}-2s-r)\rho(n),\quad n\in\N,\ n>N.
$$
Since $s<q$, we can pick $s<\tilde{s}<q$, in which case it would be enough to show the existence of $r>0$ and $N\in\N$ such that
$$
\frac{\rho(2n)}{\rho(n)}\le\frac{2\tilde{s}-2s-r}{\tilde{s}-s}=2-\frac{r}{\tilde{s}-s},\quad n\in\N,\ n>N.
$$
But this follows directly from the assumption \eqref{rho: growth condition} so that the lemma is proved.
\end{proof}

\begin{rem}\label{importance of rho condition}
(i) As it was already mentioned in the proof of Theorem \ref{T:diff weighted LP relation-non-weakly subadditive}, since $\rho$ is concave and $\rho(0)=0$, we always have that $\ds\frac{\rho(2n)}{\rho(n)}\le2$, $n\in\N$. However we need a stronger assumption to obtain the differential norm relation.

(ii) It follows from \cite[Corollary 5.2]{OS1} and \cite[Theorem 2.3]{OSS2} that the conditions \eqref{rho: growth condition} hold for polynomial weights \eqref{Eq:poly weight-defn} with $\beta>d/q$ and all subexponential weights \eqref{Eq:Expo weight-defn} so that Theorem~\ref{T:diff weighted LP relation-non-weakly subadditive} can be applied to these classes of weights. However it does not work for subexponential weights \eqref{Eq:Expo weight II-defn} since they do not satisfy the condition \eqref{rho: growth condition} regarding the growth of $\rho$.

(iii) The condition \eqref{rho: growth condition} is not only sufficient to use the approach of Theorem \ref{T:diff norm-weighted Lp} but in most natural cases it is also necessary for the inequality
$$
\|f\ast f\|_{p,\om}\le C\|f\|_{p,\om}^{1+\theta}\|f\|_2^{1-\theta},\quad f\in\ell^p(G,\om),
$$
to be satisfied. For example, if we take $G=\mathbb{Z}$, $\tau(n)=|n|$, and substitute $f=\delta_n$, $n\in\N$, in the preceding inequality, then we get
$$
\om(2n)\le C \big(\om(n)\big)^{1+\theta},\quad n\in\N.
$$
In terms of $\rho$, the last inequality is equivalent to
$$
\frac{\rho(2n)}{\rho(n)}\le 1+\theta+\frac{\ln C}{\rho(n)},\quad n\in\N,
$$
which implies \eqref{rho: growth condition} since $\theta<1$ and $\rho(n)\to \infty$ as $n\to\infty$.
\end{rem}

\section{Norm-controlled inversion of $\ell^p(G,\omega)$ in $C^*_r(G)$}\label{S:Norm control-weighted Lp sapce-discrete groups}

We are now ready to present our main results on discrete groups.

\begin{thm}\label{T:norm control general-discrete}
Let $G$ be a discrete group, let $\om$ be a weight on $G$, and let $1\le p<\infty$ and $q$ be the index conjugate to $p$. Suppose that there exists a bounded function $u:G\to\R^+$,  $s<q$, and $r>0$ such that $\sigma=\om u$ is bounded away from zero, 
\begin{align*}
\frac{\om(xy)}{\om(x)\om(y)}\le u(x)+u(y),\quad x,y\in G,\quad\text{and}\quad
\sum_{x\in G} u(x)^s\om(x)^{r}dx<\infty.
\end{align*}
Then $\ell^p(G,\omega)$ is a Banach $*$-algebra with respect to the convolution product which is inverse closed in $C^*_r(G)$ and $\ell^p(G,\omega)$ admits a norm-controlled inversion in $C^*_r(G)$. This, in particular, holds if $\om$ is weakly subadditive and $\om^{-1}\in \ell^s(G)$ for some $0<s<q$.
\end{thm}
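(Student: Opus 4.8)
The plan is to assemble the machinery of Sections~\ref{S:Norm control-diff subalg} and~\ref{S:Diff subalg-weight Lp spaces} with $\A=\ell^p(G,\om)$ and $\B=C^*_r(G)$. First I would record that every weight satisfies $\om\ge1$: from $\om(e)=1$, $\om(x)=\om(x^{-1})$, and submultiplicativity we get $1=\om(e)\le\om(x)\om(x^{-1})=\om(x)^2$. Since $r>0$ forces $\om^r\ge1$, the summability hypothesis gives $\sum_x u(x)^s\le\sum_x u(x)^s\om(x)^r<\infty$, so $u\in\ell^s(G)$; as $s<q$ and $G$ is discrete, $\ell^s(G)\subseteq\ell^q(G)$, whence $u\in\ell^q(G)$. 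The algebra condition together with $u\in\ell^q(G)$ is exactly the hypothesis of (the discrete case of) Theorem~\ref{Lp algebra}, which shows that $\ell^p(G,\om)$ is a Banach algebra under convolution and yields the differential-norm inequality \eqref{differential norm1}. That $\ell^p(G,\om)$ is in fact a Banach $*$-algebra is routine: the involution $f^*(x)=\overline{f(x^{-1})}$ is isometric because $\om(x^{-1})=\om(x)$, and it is the standard involution on the convolution algebra.

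Next I would set up the inclusions needed to view $\A$ inside $\B$ sharing a common unit. Since $\sigma=\om u$ is bounded away from zero, Theorem~\ref{Lp algebra} gives $\ell^p(G,\om)\subseteq\ell^1(G,\sigma)\subseteq\ell^1(G)$, and via the left regular representation $\lambda$ we have $\ell^1(G)\subseteq C^*_r(G)$ with $\delta_e$ the common unit. In particular $\ell^p(G,\om)\subseteq\ell^1(G)\subseteq\ell^2(G)$, so $\ell^p(G,\om)\cap\ell^2(G)=\ell^p(G,\om)$ and Theorem~\ref{T:diff norm-weighted Lp} (whose hypotheses coincide verbatim with those assumed here, with integrals replaced by sums) applies to \emph{every} $f\in\ell^p(G,\om)$, producing $0<\theta<1$ and $C>0$ with
\[
\|f* f\|_{p,\om}\le C\,\|f\|_{p,\om}^{1+\theta}\,\|f\|_2^{1-\theta}.
\]

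The one genuinely conceptual step is to replace the $\ell^2$-norm on the right by the $C^*_r(G)$-norm demanded by Proposition~\ref{norm control general}. This is done through the identity $f=\lambda(f)\delta_e$: since $\|\delta_e\|_2=1$,
\[
\|f\|_2=\|\lambda(f)\delta_e\|_2\le\|\lambda(f)\|_{\bl}\,\|\delta_e\|_2=\|f\|_{C^*_r(G)}.
\]
Substituting this into the previous inequality produces exactly the modified differential norm \eqref{power inequality} for the pair $(\A,\B)=(\ell^p(G,\om),C^*_r(G))$, whereupon Proposition~\ref{norm control general} immediately yields that $\ell^p(G,\om)$ is inverse-closed in $C^*_r(G)$ and admits a norm-controlled inversion there. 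I expect this bridge $\|f\|_2\le\|f\|_{C^*_r(G)}$ --- together with the automatic inclusion $\ell^p(G,\om)\subseteq\ell^2(G)$ in the discrete setting --- to be the crux, since it is what licenses passing from the $L^2$-based estimate of Theorem~\ref{T:diff norm-weighted Lp} to the operator-norm framework of Proposition~\ref{norm control general}; everything else is bookkeeping.

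Finally, for the ``in particular'' clause I would verify that weak subadditivity together with $\om^{-1}\in\ell^{s_0}(G)$ for some $0<s_0<q$ is a special case of the main hypotheses, exactly as in the discussion preceding Corollary~\ref{norm control weakly subadditive}. Weak subadditivity gives $\om(xy)/(\om(x)\om(y))\le D/\om(x)+D/\om(y)$, so I set $u=D/\om$; then $u$ is bounded by $D$ (as $\om\ge1$), $\sigma=\om u\equiv D$ is bounded away from zero, and the algebra condition holds. It remains to realize the summability condition: $\sum_x u(x)^s\om(x)^r=D^s\sum_x\om(x)^{r-s}$, so choosing $r>0$ small enough that $s:=s_0+r<q$ makes this equal to $D^s\sum_x\om(x)^{-s_0}<\infty$. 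Thus the main statement applies and the conclusion follows.
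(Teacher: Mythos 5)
Your proposal is correct and follows essentially the same route as the paper: Theorem~\ref{Lp algebra} for the algebra structure, Theorem~\ref{T:diff norm-weighted Lp} for the modified differential norm, the discrete-case inequality $\|f\|_2\le\|f\|_{C^*_r(G)}$ as the bridge, and Proposition~\ref{norm control general} to conclude. The extra details you supply (that $\om\ge1$ forces $u\in\ell^s\subseteq\ell^q$, that $\ell^p(G,\om)\cap\ell^2(G)=\ell^p(G,\om)$, and the exponent bookkeeping for the weakly subadditive case) are all correct and are exactly what the paper leaves implicit.
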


\begin{proof}
We first note that our hypotheses together with Theorem \ref{Lp algebra} imply that $\ell^p(G,\omega)$ is a Banach algebra with respect to convolution and $\ell^p(G,\om)\subseteq C^*_r(G)$. Moreover, by Theorem \ref{T:diff norm-weighted Lp},
there exist $0<\theta<1$ and $C>0$ such that
\begin{align}\label{differential norm-app}
\|f\ast f\|_{p,\omega}\le C\|f\|_{p,\omega}^{1+\theta}\|f\|_2^{1-\theta},\quad f\in\ell^p(G,\omega).
\end{align}
Finally, since $G$ is discrete, we have that $C^*_r(G)\subset \ell^2(G)$ so that
\begin{align}\label{cv2 norm1}
\|f\|_2\le\|f\|_{C^*_r},\quad f\in C^*_r(G).
\end{align}
Hence, if we combine inequalities \eqref{differential norm-app} and \eqref{cv2 norm1}, the result will follow from Proposition \ref{norm control general}.
\end{proof}

In the following corollary, we summarize various cases that our methods can be applied to obtain the norm-control inversion for weighted $\ell^p$ spaces.


\begin{cor}\label{C:norm control general-discrete-examples}
Let $G$ be a discrete group, let $\om$ be a weight on $G$, and let $1\le p<\infty$ and $q$ be the index conjugate to $p$. Then $\ell^p(G,\omega)$ admits a norm-controlled inversion in $C^*_r(G)$ in either of the following cases:\\
$(i)$ $G$ is locally finite and $\om$ is the weight \eqref{Eq:weight-locally finite} with $\om^{-1}\in \ell^s(G)$ for some $0<s<q$;\\
$(ii)$ $G$ is a finitely generated group of polynomial growth and $\om:=\om_\beta$ is the weight \eqref{Eq:poly weight-defn} with $\beta>d(G)/q$;\\
$(iii)$ $G$ is a finitely generated group of polynomial growth and $\om:=\om_{\alpha,C}$ is the weight \eqref{Eq:Expo weight-defn};\\
$(iv)$ $G$ is a finitely generated group of intermediate growth whose growth is bounded by $e^{n^{\alpha_0}}$ and $\om:=\om_{\alpha,C}$ is the weight \eqref{Eq:Expo weight-defn} with $0<\alpha_0<\alpha<1$.
\end{cor}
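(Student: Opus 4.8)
The plan is to reduce every case to an application of Theorem~\ref{T:norm control general-discrete}: in each situation it suffices to exhibit a bounded $u:G\to\R^+$ together with exponents $s<q$ and $r>0$ for which $\sg=\om u$ is bounded away from zero, the algebra condition $\om(xy)/(\om(x)\om(y))\le u(x)+u(y)$ holds, and $\sum_{x\in G}u(x)^s\om(x)^r<\infty$; the inverse-closedness and the norm-controlled inversion in $C^*_r(G)$ then follow at once. The four cases split into two families: case $(i)$, where $\om$ is weakly subadditive, and cases $(ii)$--$(iv)$, where $\om$ is a length-function weight $\om(x)=e^{\rho(\tau(x))}$ and the verification is routed through Theorem~\ref{T:diff weighted LP relation-non-weakly subadditive}.

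For case $(i)$ I would simply recall from Example~\ref{E:locally finite-diff Lp norm relation} that the weight \eqref{Eq:weight-locally finite} satisfies $\om(st)=\max\{\om(s),\om(t)\}$ and is therefore weakly subadditive. Since $\om^{-1}\in\ell^s(G)$ for some $0<s<q$ is assumed, the final clause of Theorem~\ref{T:norm control general-discrete} (equivalently, Corollary~\ref{norm control weakly subadditive} combined with the discrete estimate $\|f\|_2\le\|f\|_{C^*_r}$) yields the claim directly.

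For cases $(ii)$--$(iv)$ I would invoke Theorem~\ref{T:diff weighted LP relation-non-weakly subadditive}, so that the work reduces to checking its two hypotheses in \eqref{rho: growth condition}: that $\limsup_{n\to\infty}\rho(2n)/\rho(n)<2$ and that the associated $u$ from \eqref{Eq:subexpo weight decreasing quotient-2} lies in $\ell^s(G)$ for some $0<s<q$. The growth-ratio condition is immediate in each case: for the polynomial weight \eqref{Eq:poly weight-defn} one has $\rho(n)=\beta\ln(1+n)$, whence $\rho(2n)/\rho(n)\to1$, while for the subexponential weight \eqref{Eq:Expo weight-defn} one has $\rho(n)=Cn^\alpha$ and $\rho(2n)/\rho(n)=2^\alpha<2$ since $0<\alpha<1$. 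It then remains only to establish the summability $u\in\ell^s(G)$, which is where the growth of $G$ enters.

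The main obstacle is precisely this summability check, since $\sum_{x\in G}u(x)^s$ must be converted into a one-dimensional sum weighted by the number of group elements at each length. Writing $u_n=e^{\rho(2n)-2\rho(n)}$ and $S_n=\{x:\tau(x)=n\}$, one has $\sum_{x\in G}u(x)^s=\sum_{n\ge0}|S_n|\,u_n^s$. For a group of polynomial growth of order $d$, a summation-by-parts argument based on $|U^n|\le Cn^d$ reduces the convergence to that of $\sum_n n^{d-1-\beta s}$, so in case $(ii)$ the series converges for some $s<q$ exactly because $\beta>d(G)/q$ forces $d/\beta<q$; and in case $(iii)$ the factor $u_n^s=e^{sC(2^\alpha-2)n^\alpha}$ decays subexponentially and dominates any polynomial sphere growth, giving $u\in\ell^s(G)$ for every $s>0$. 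These two verifications are in fact already recorded in Remark~\ref{importance of rho condition}(ii). Case $(iv)$ is the genuinely new point: here $G$ has intermediate growth with $|U^n|\le e^{n^{\alpha_0}}$, so $\sum_{x\in G}u(x)^s\le\sum_n e^{n^{\alpha_0}}e^{sC(2^\alpha-2)n^\alpha}$, and the hypothesis $\alpha_0<\alpha$ guarantees that the decay of $u_n^s$ of order $n^\alpha$ outpaces the growth of order $n^{\alpha_0}$, making the series converge for every $s>0$, and in particular for some $s<q$. Once \eqref{Eq:weighted Lp diff formula} is established in each case, Theorem~\ref{T:norm control general-discrete} (via Proposition~\ref{norm control general} and $\|f\|_2\le\|f\|_{C^*_r}$) completes the argument.
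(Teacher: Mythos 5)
Your proposal is correct and follows essentially the same route as the paper: case $(i)$ via the weakly subadditive clause, cases $(ii)$--$(iv)$ via Theorem~\ref{T:diff weighted LP relation-non-weakly subadditive}, with the only substantive verification being the summability $u\in\ell^s(G)$ in case $(iv)$, which you carry out exactly as the paper does by bounding $\sum_n|U^n\setminus U^{n-1}|e^{-sC(2-2^{\alpha})n^{\alpha}}\le\sum_n e^{n^{\alpha_0}-sC(2-2^{\alpha})n^{\alpha}}$. Your explicit checks of the growth-ratio and summability conditions in $(ii)$ and $(iii)$ merely unpack what the paper delegates to Remark~\ref{importance of rho condition}(ii) and its references.
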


\begin{proof}
As was mentioned in Example~\ref{E:locally finite-diff Lp norm relation}, the weight \eqref{Eq:weight-locally finite} is weakly subadditive and hence (i) follows from Theorem \ref{T:diff weighted LP relation-non-weakly subadditive}. Statements (ii) and (iii) also follow from Theorem \ref{T:diff weighted LP relation-non-weakly subadditive} and Remark~\ref{importance of rho condition}(ii). So we just need to prove (iv). Again, according to Theorem~\ref{T:diff weighted LP relation-non-weakly subadditive}, it suffices to verify that
$$
u(x)=e^{[C(2\tau(x))^{\alpha}-2C\tau(x)^{\alpha}]}=e^{-C(2-2^{\alpha})\tau(x)^{\alpha}}\in \ell^s(G)
$$
for some $0<s<q$. However this is true for any $s>0$. Indeed,
\begin{align*}
\sum_{x\in G} u(x)^s=1+\sum\limits_{n=1}^{\infty}|U^n\setminus U^{n-1}| \cdot{e^{-sC(2-2^{\alpha})n^{\alpha}}}\le 1+\sum\limits_{n=1}^{\infty} {e^{n^{\alpha_0}-sC(2-2^{\alpha})n^{\alpha}}}<\infty,
\end{align*}
since $0<\alpha_0<\alpha<1$, and the corollary is proved.
\end{proof}

We would like to point out that intermediate groups satisfying conditions of Corollary \ref{C:norm control general-discrete-examples}(iv) are shown to exist, see for example \cite{BE1} and \cite{BE2}.

\section{Some generalizations for algebras on a non-discrete group $G$}\label{S:Norm control-weighted Lp sapce-Non discrete groups}

The main difference between the cases when the group $G$ is discrete and when it is not discrete is in the existence of the unit in the algebra $L^p(G,\omega)$. There are two approaches that allow us to consider inverse closedness or norm controlled inversion for an algebra without unit: switching to quasi-inverses or adjoining the unit to our algebra. We first consider the quasi-inverse closedness of the algebra $L^p(G,\om)\cap L^2(G)$ in $B(L^2(G))$. Let us start by reminding the corresponding definitions which can be found, for example, in \cite{BD}.

\begin{defn}
Let $\A$ be a Banach algebra and $a,b\in \A$. The {\em quasi-product} $a\circ b$ of $a$ and $b$ in $\A$ is then defined by
\begin{align}\label{quasi-product}
a\circ b=a+b-ab.
\end{align}
\end{defn}
The operation of quasi-product is associative and satisfies
$$
a\circ 0=0\circ a=a,\quad a\in\A,
$$
which means that the zero element plays a role of quasi-unit in $\A$. If $\A$ is a $*$-algebra, then we also naturally have that
$$
(a\circ b)^*=b^*\circ a^*, \quad a,b\in\A.
$$
However, the distributive and constant multiple rules for quasi-products are different from the usual ones. But since we are not going to use them, we do not state the precise formulations here.

\begin{defn}
Let $\A$ be a Banach algebra and $a\in\A$. An element $a^0\in\A$ is called a {\em quasi-inverse} for $a$ if
$$
a\circ a^0=a^0\circ a=0.
$$
If an element $a\in\A$ has a quasi-inverse, then it is called {\em quasi-invertible} or {\em quasi-regular} and otherwise it is called {\em quasi-singular}. The sets of all quasi-invertible and quasi-singular elements of $A$ are denoted by $\mathrm{q-Inv}(\A)$ and $\mathrm{q-Sing}(\A)$ respectively.
\end{defn}

The motivation for defining quasi-products and quasi-inverses comes from the desire to define the spectrum of an element of a non-unital algebra $\A$, and the definition (\ref{quasi-product}) follows from the relation
\begin{align}\label{4}
e-a\circ b=(e-a)(e-b), \quad a,b\in\A,
\end{align}
that holds in the case when $\A$ has a unit element $e$. In particular, we see that $a\in\A$ is quasi-invertible if and only if $e-a$ is invertible and $a^0=e-(e-a)^{-1}$. The relation \eqref{4} also justifies the following definition of the spectrum in a non-unital Banach algebra.

\begin{defn}
Let $A$ be a Banach algebra without a unit and $a\in\A$. We define the {\em spectrum} $\mathrm{Sp}_{\A}(a)$ of $a$ in $\A$ by
\begin{align*}
\sigma_{\A}(a)=\{0\}\cup\left\{\lambda\in\mathbb{C}\setminus\{0\}\,:\,\frac1{\lambda}\,a\in\mathrm{q-Sing(A)}\right\}.
\end{align*}
\end{defn}

The notion of spectrum is closely related to (quasi-)inverse closedness. More precisely, if an algebra $\A$ is contained in an algebra $\B$ then $\A$ is (quasi-)inverse closed in $\B$ if and only if $\sigma_{\A}(a)=\sigma_{\B}(a)$, $a\in \A$. (Of course, here we mean that $\A$ is quasi-inverse closed in $\B$ if for every $a\in\A$ the existence of $a^0\in \B$ implies that $a^0\in\A$.) Thanks to the generalization of a result of Hulanicki (\cite[Proposition~2.5]{Hulanicki}) given in \cite[Lemma 3.1]{FGL1}, when $\A$ and $\B$ have common involution and $\B$ is symmetric, the equality of the spectra $\sigma_{\A}(a)=\sigma_{\B}(a)$ is equivalent to the equality of the spectral radii $\rho_{\A}(a)=\rho_{\B}(a)$, $a\in\A$. This allows us to prove the quasi-inverse closedness of the algebra $\A=L^p(G,\om)\cap L^2(G)$ in $B(L^2(G))$ under similar assumptions as in Theorem~\ref{T:norm control general-discrete} for a non-necessary discrete group $G$.

\begin{prop}\label{quasi-inverse closedness}
Let $G$ be a locally compact group, let $\om\ge1$ be a weight on $G$, and let $1\le p<\infty$ and $q$ the index conjugate to $p$. Suppose that there exists a bounded function $u:G\to\R^+$,  $s<q$, and $r>0$ such that $\sigma(x)=\om(x)u(x)$, $x\in G$, is bounded away from zero,
\begin{align*}
\frac{\om(xy)}{\om(x)\om(y)}\le u(x)+u(y),\quad x,y\in G,\quad\text{and}
\end{align*}
\begin{align*}
\int\limits_G u(x)^s\om(x)^{r}dx<\infty.
\end{align*}
Then $\A=L^p(G,\omega)\cap L^2(G)$ with the norm
$$
\|f\|_{\A}=\max\{\|f\|_{p,\om}\,,\|f\|_2\},\quad f\in\A,
$$
is a Banach $*$-algebra under convolution and it is quasi-inverse closed in $B(L^2(G))$.
\end{prop}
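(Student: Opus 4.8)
The plan is to show first that $\A$ is a Banach $*$-algebra continuously contained in $\bg$, then to reduce quasi-inverse closedness to the equality of spectral radii $\rho_\A(a)=\rho_{\bg}(a)$, and finally to establish that equality through a two-step growth estimate that handles the weighted norm and the $L^2$ norm separately.

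For the algebra structure, note that since $\sigma=\om u$ is bounded away from zero, Theorem~\ref{Lp algebra} gives $L^p(G,\om)\subseteq L^1(G,\sigma)\subseteq L^1(G)$, so every $f\in\A$ acts by left convolution as a bounded operator on $L^2(G)$ and $\A\subseteq\bg$. Completeness of $\A$ is routine: a sequence Cauchy in the max-norm is Cauchy in both $L^p(G,\om)$ and $L^2(G)$, and the two limits agree almost everywhere. To see that convolution is bounded I would combine \eqref{differential norm1} with the estimate \eqref{Eq:general pytlik} from the proof of Theorem~\ref{T:diff norm-weighted Lp}, namely $\|f\|_{1,\sigma}\le C\|f\|_2^{1-\theta}\|f\|_{p,\om}^{\theta}\le C\|f\|_{\A}$; this bounds $\|f\ast g\|_{p,\om}$ by a multiple of $\|f\|_{\A}\|g\|_{\A}$, while Young's inequality together with $\|f\|_1\le\delta^{-1}\|f\|_{1,\sigma}$ (where $\sigma\ge\delta>0$) bounds $\|f\ast g\|_2$ in the same way. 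Thus $\|f\ast g\|_{\A}\le C'\|f\|_{\A}\|g\|_{\A}$, so $\A$ is a Banach algebra (after passing to an equivalent submultiplicative norm if desired), and a routine computation using $\om(x)=\om(x^{-1})$ shows the involution $f^*(x)=\overline{f(x^{-1})}\Delta(x^{-1})$ preserves $\A$ and coincides with the operator adjoint inherited from $\bg$.

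Since $\bg$ is a $C^*$-algebra it is symmetric, and $\A$ and $\bg$ carry the common involution above; by the generalization of Hulanicki's result in \cite[Lemma~3.1]{FGL1}, quasi-inverse closedness of $\A$ in $\bg$ is equivalent to $\rho_\A(a)=\rho_{\bg}(a)$ for all $a\in\A$. The inequality $\rho_{\bg}(a)\le\rho_\A(a)$ is immediate from the continuous embedding $\|\cdot\|_{\bg}\le c\,\|\cdot\|_{\A}$, so the whole proof reduces to $\rho_\A(a)\le\rho_{\bg}(a)$. This is exactly where the non-discrete case genuinely departs from Theorem~\ref{T:norm control general-discrete}: there is no $\delta_e\in L^2(G)$, so the inequality $\|f\|_2\le\|f\|_{\bg}$ of \eqref{cv2 norm1} fails, and one cannot reduce \eqref{Eq:weighted Lp diff formula} to the modified differential norm \eqref{power inequality} and simply invoke Proposition~\ref{norm control general}. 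I expect this loss to be the main obstacle, and the device to circumvent it is to control the two summands of $\|a^n\|_{\A}=\max\{\|a^n\|_{p,\om},\|a^n\|_2\}$ independently. Fix $\eps>0$ and set $R=\rho_{\bg}(a)$. The $L^2$-growth is controlled operator-theoretically via $\|a^n\|_2=\|a^{n-1}\ast a\|_2\le\|a^{n-1}\|_{\bg}\|a\|_2$, which gives $\limsup_n\|a^n\|_2^{1/n}\le R$. For the weighted part I would iterate \eqref{Eq:weighted Lp diff formula} dyadically: writing $X_k=\|a^{2^k}\|_{p,\om}$ and $Y_k=\|a^{2^k}\|_2$, relation \eqref{Eq:weighted Lp diff formula} yields $X_k\le C X_{k-1}^{1+\theta}Y_{k-1}^{1-\theta}$, and inserting $Y_{k-1}\le M(R+\eps)^{2^{k-1}}$ and taking logarithms produces the linear recursion $\ln X_k\le(1+\theta)\ln X_{k-1}+(1-\theta)2^{k-1}\ln(R+\eps)+\mathrm{const}$, whose solution has leading term $(\ln(R+\eps))2^k$ with a correction of order $(1+\theta)^k$.

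Because $\theta<1$, the correction satisfies $(1+\theta)^k/2^k\to0$, so dividing by $2^k$ and letting $k\to\infty$ gives $\lim_k X_k^{1/2^k}\le R+\eps$. Finally, the spectral radius in a non-unital Banach algebra is $\lim_n\|a^n\|_{\A}^{1/n}$ and may be computed along the subsequence $n=2^k$; hence $\rho_\A(a)=\lim_k\max\{X_k,Y_k\}^{1/2^k}\le R+\eps$, and letting $\eps\to0$ yields $\rho_\A(a)\le\rho_{\bg}(a)$. Combined with the reverse inequality this gives $\rho_\A(a)=\rho_{\bg}(a)$ for every $a\in\A$, and by \cite[Lemma~3.1]{FGL1} the algebra $\A$ is quasi-inverse closed in $\bg$, as claimed.
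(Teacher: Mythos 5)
Your proposal is correct and follows essentially the same route as the paper: the same submultiplicativity estimates for the max-norm $\|\cdot\|_{\A}$, the same reduction of quasi-inverse closedness to the equality of spectral radii via \cite[Lemma 3.1]{FGL1}, and the same key device of bounding $\|g^{(n)}\|_2$ by $\|g^{(n-1)}\|_{B(L^2(G))}\|g\|_2$ to compensate for the failure of $\|\cdot\|_2\le\|\cdot\|_{B(L^2(G))}$ in the non-discrete setting. The only difference is bookkeeping: the paper applies $\|f\ast f\|_{\A}\le K\|f\|_{\A}^{1+\theta}\|f\|_2^{1-\theta}$ once with $f=g^{(n)}$, takes $n$-th roots, and obtains $\rho_{\A}(g)^2\le\rho_{\A}(g)^{1+\theta}\rho_{B(L^2(G))}(g)^{1-\theta}$ in one step, whereas you iterate dyadically and solve the resulting linear recursion---both computations yield the same conclusion.
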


\begin{proof}
We first show that $\A$ is a Banach algebra under convolution. Since by Theorem~\ref{Lp algebra} under our assumptions $L^p(G,\om)$ is a Banach algebra, we have that for some $C_1>0$
\begin{align*}
\|f\ast g\|_{p,\om}\le C_1 \|f\|_{p,\om}\|g\|_{p,\om}, \quad f,g\in L^p(G,\om).
\end{align*}
Our assumptions also imply that $L^p(G,\om)\subset L^1(G)$ so that it follows that for some $C_2>0$ we have
\begin{align}\label{2}
\|f\ast g\|_2\le\|f\|_1\|g\|_2\le C_2 \|f\|_{p,\om}\|g\|_2, \quad f\in L^p(G,\om),\, g\in L^2(G).
\end{align}
Therefore, for $f,g\in\A=L^p(G,\om)\cap L^2(G)$ we have that
\begin{align*}
\|f\ast g\|_{\A}&=\max\{\|f\ast g\|_{p,\om}\,,\|f\ast g\|_2\}\le\max\{C_1,C_2\}\|f\|_{p,\om}\max\{\|g\|_{p,\om}\,,\|g\|_2\}\\&\le\max\{C_1,C_2\}\|f\|_{\A}\|g\|_{\A},
\end{align*}
which means that $\A$ is indeed a Banach algebra under convolution. It also follows from \eqref{2} that $\A$ continuously embeds in $B(L^2(G))$.

As was discussed above, to prove that $\A$ is quasi-inverse closed in $B(L^2(G))$ it is enough to show that $\rho_{\A}(f)=\rho_{B(L^2(G))}(f)$, $f\in\A$. By \eqref{Eq:weighted Lp diff formula}, there is $C>0$ and $0<\theta<1$ such that
\begin{align*}
\|f\ast f\|_{p,\om}\le C\|f\|_{p,\om}^{1+\theta}\|f\|_2^{1-\theta},\quad f\in\A.
\end{align*}
Since for any $f\in\A$ we have that $\|f\|_{\A}\ge\|f\|_2$, $\|f\|_{\A}\ge\|f\|_{p,\om}$ and $\theta>0$, we can combine the above inequality with \eqref{2} for $g=f$ to obtain
\begin{align}\nonumber
\|f\ast f\|_{\A}&=\max\{\|f\ast f\|_{p,\om}\,,\|f\ast f\|_2\}\le\max\{C\|f\|_{p,\om}^{1+\theta}\|f\|_2^{1-\theta},C_2(\|f\|_{p,\om}\|f\|_2^{\theta})\|f\|_2^{1-\theta}\}\\ \label{3} &\le\max\{C,C_2\}\|f\|_{\A}^{1+\theta}\|f\|_2^{1-\theta},\quad f\in\A.
\end{align}
For $n\in\mathbb{N}$, we denote the $n$-{th} convolution power of a function $g$ by $g^{(n)}$. We then apply \eqref{3} to $f=g^{(n)}$, $g\in\A$, $n\in\N$, to get
\begin{align*}
\|g^{(2n)}\|_{\A}\le \max\{C,C_2\}\|g^{(n)}\|_{\A}^{1+\theta}\|g^{(n)}\|_2^{1-\theta}\le \max\{C,C_2\}\|g^{(n)}\|_{\A}^{1+\theta}\|g^{(n-1)}\|_{B(L^2(G))}^{1-\theta}\|g\|_2^{1-\theta}.
\end{align*}
As in the proof of Proposition \ref{norm control general}, we can now take the $n$-th root and $\lim_{n\to\infty}$ of both sides of the above inequality to obtain that $(\rho_{\A}(g))^2\le(\rho_{\A}(g))^{1+\theta}(\rho_{B(L^2(G))}(g))^{1-\theta}$, which implies that $\rho_{\A}(g)\le\rho_{B(L^2(G))}(g)$, $g\in\A$. Because the reverse inequality follows immediately from the embedding $\A\subseteq B(L^2(G))$, we get the desired equality of spectral radii $\rho_{\A}(g)=\rho_{B(L^2(G))}(g)$, $g\in\A$, and the theorem is proved.
\end{proof}

We now turn to the second approach of unitization of a non-unital Banach algebra which will allow us not only to talk about inverse closedness but also consider the norm controlled inversion.

\begin{defn}\label{unitization}
Let $\A$ be a Banach algebra without unit. The {\em unitization} $\widetilde{\A}$ of $\A$ is the set $\A\times\C$ with the following operations of addition, scalar multiplication, and product ($a,b\in\A,\ t,s\in\C$):
\begin{align*}
(a,t)+(b,s)&=(a+b,t+s),\\
s(a,t)&=(sa,st),\\
(a,t)(b,s)&=(ab+sa+tb,ts),
\end{align*}
and with the norm
$$
\|(a,t)\|=\|a\|+|t|.
$$
In the case when $\A$ is a $*$-algebra, the involution can be extended from $\A$ to $\Au$ by
$$
(a,t)^*=(a^*,\overline{t}).
$$
\end{defn}

It is easy to see that $\Au$ is a Banach algebra with the unit $(0,1)$ and that the mapping $a\mapsto (a,0)$ establishes an isometric ($*$-)isomorphism of $\A$ into $\Au$. Moreover, by \cite[Lemma 5.2]{BD}, the following relation between the spectra in $\A$ and $\Au$ holds.

\begin{prop}\label{spectral equivalence}
Let $A$ be a non-unital Banach algebra, and let $\Au$ be its unitization. Then
\begin{align}\label{spectra}
\sigma_{\A}(a)=\sigma_{\Au}\bigl((a,0)\bigr),\quad a\in \A.
\end{align}
\end{prop}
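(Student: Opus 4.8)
The plan is to treat the cases $\lambda=0$ and $\lambda\ne0$ separately, since the definition of the spectrum on a non-unital algebra singles out the point $0$. That $0$ lies in both sets is immediate: $0\in\sigma_{\A}(a)$ by definition, while $0\in\sigma_{\Au}\bigl((a,0)\bigr)$ because $(a,0)$ cannot be invertible in $\Au$. Indeed, from the product rule $(b,s)(c,t)=(bc+sc+tb,\,st)$ the scalar part of any product equals $st$, so an element of the form $(b,0)$ can never have product $(0,1)$ with anything. Thus the whole content of the proposition lies in the range $\lambda\ne0$.

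For $\lambda\ne0$ I would rewrite both sides as quasi-regularity statements. On the unitization side, $\lambda\in\sigma_{\Au}\bigl((a,0)\bigr)$ means exactly that $(a,0)-\lambda(0,1)=(a,-\lambda)$ fails to be invertible in $\Au$. The key algebraic step is the factorization
\[
(a,-\lambda)=-\lambda\Bigl((0,1)-\tfrac1\lambda(a,0)\Bigr),
\]
which, since $\lambda\ne0$, shows that $(a,-\lambda)$ is invertible in $\Au$ if and only if $e-\tfrac1\lambda(a,0)$ is, where $e=(0,1)$. By relation~\eqref{4} and the remark following it, the latter is equivalent to $\tfrac1\lambda(a,0)$ being quasi-regular in $\Au$. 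On the other hand, by definition $\lambda\notin\sigma_{\A}(a)$ precisely when $\tfrac1\lambda a$ is quasi-regular in $\A$. Hence the proposition reduces to the single assertion that $\tfrac1\lambda a$ is quasi-regular in $\A$ if and only if $(\tfrac1\lambda a,0)$ is quasi-regular in $\Au$.

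This last equivalence is where the real work sits. One direction is free: the map $a\mapsto(a,0)$ is a $*$-homomorphism, hence preserves quasi-products, so a quasi-inverse $c\in\A$ of $\tfrac1\lambda a$ maps to a quasi-inverse $(c,0)$ of $(\tfrac1\lambda a,0)$. The converse is the main obstacle, since \emph{a priori} a quasi-inverse of $(\tfrac1\lambda a,0)$ in $\Au$ is some $(c,t)$ with possibly nonzero scalar part, and I must show this forces $t=0$ with $c\in\A$ a genuine quasi-inverse in $\A$. I expect to settle this by expanding the defining relation $(\tfrac1\lambda a,0)\circ(c,t)=(0,0)$ with the product formula: the scalar coordinate of the quasi-product $(\tfrac1\lambda a,0)\circ(c,t)$ is simply $t$, so $t=0$ is forced, and once $t=0$ the first coordinate collapses to the quasi-product $\tfrac1\lambda a\circ c$ computed inside $\A$. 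Running the same computation for $(c,t)\circ(\tfrac1\lambda a,0)$ then exhibits $c$ as a two-sided quasi-inverse of $\tfrac1\lambda a$ in $\A$, completing the chain of equivalences and hence the proof.
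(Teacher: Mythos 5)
Your argument is correct and complete: the reduction of $\lambda\neq0$ to quasi-regularity via the factorization $(a,-\lambda)=-\lambda\bigl((0,1)-\tfrac1\lambda(a,0)\bigr)$, the observation that the scalar coordinate of the quasi-product $(b,0)\circ(c,t)$ equals $t$ (forcing any quasi-inverse in $\Au$ to lie in the copy of $\A$), and the non-invertibility of $(a,0)$ handling $\lambda=0$ are exactly the right steps. The paper itself gives no proof of this proposition, citing only \cite[Lemma 5.2]{BD}; your write-up is the standard argument behind that reference, so there is nothing to compare beyond noting that you have supplied the details the paper omits.
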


From this we can obtain the following corollary of Theorem~\ref{quasi-inverse closedness}.

\begin{cor}\label{inverse closed-unitization}
Let $G$ be a non-discrete locally compact group, let $\om$ be a weight on $G$, and let $1\le p<\infty$ satisfy the conditions of  Proposition~\ref{quasi-inverse closedness}. Let $\A=L^p(G,\om)\cap L^2(G)$ and $\Au$ be its unitization with the convolution product extended as in Definition~\ref{unitization}. Then $\Au$ is continuously embedded into $B(L^2(G))$ and it is inverse closed in $B(L^2(G))$.
\end{cor}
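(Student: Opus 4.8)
The plan is to realize $\Au$ concretely inside $B(L^2(G))$ through the left regular representation and then to convert the asserted inverse-closedness into an equality of spectra that can be read off from Proposition~\ref{quasi-inverse closedness} and Proposition~\ref{spectral equivalence}. Write $\pi\from L^1(G)\to B(L^2(G))$ for the left regular representation, $\pi(f)g=f\ast g$, and define $\Phi\from\Au\to B(L^2(G))$ by $\Phi(f,t)=\pi(f)+tI$, where $I$ is the identity operator. First I would check that $\Phi$ is a continuous embedding with common unit. Under the present hypotheses $\A=L^p(G,\om)\cap L^2(G)\subseteq L^1(G)$, and inequality~\eqref{2} shows that $\pi|_\A$ is a bounded homomorphism into $B(L^2(G))$; hence $\Phi$ is a unital homomorphism with $\|\Phi(f,t)\|_{B(L^2(G))}\le\max\{C_2,1\}\,\|(f,t)\|_{\Au}$, sending the unit $(0,1)$ of $\Au$ to $I$. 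Injectivity follows from the final step below: if $\pi(f)+tI=0$ with $t\neq0$, then $\pi(f)=-tI$ would be invertible, which is impossible for non-discrete $G$, while $t=0$ forces $\pi(f)=0$ and hence $f=0$ by injectivity of $\pi$ on $L^1(G)$.

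Next I would invoke the criterion recalled before Proposition~\ref{quasi-inverse closedness}: $\Au$ is inverse-closed in $B(L^2(G))$ if and only if $\sigma_{\Au}\bigl((f,t)\bigr)=\sigma_{B(L^2(G))}\bigl(\Phi(f,t)\bigr)$ for all $(f,t)\in\Au$. Since both spectra are invariant under adding scalar multiples of the unit, it suffices to treat $t=0$, i.e.\ to prove
$$
\sigma_{\Au}\bigl((f,0)\bigr)=\sigma_{B(L^2(G))}\bigl(\pi(f)\bigr),\quad f\in\A.
$$
By Proposition~\ref{spectral equivalence} the left-hand side equals the non-unital spectrum $\sigma_{\A}(f)$. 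On the other hand, Proposition~\ref{quasi-inverse closedness} says that $\A$ is quasi-inverse closed in $B(L^2(G))$, which by the same criterion means that $\sigma_{\A}(f)$ coincides with the non-unital spectrum of $\pi(f)$ computed in $B(L^2(G))$; unwinding this spectrum via the relation~\eqref{4} that $e-a\circ b=(e-a)(e-b)$ identifies it with $\{0\}\cup\sigma_{B(L^2(G))}(\pi(f))$. Thus $\sigma_{\A}(f)=\{0\}\cup\sigma_{B(L^2(G))}(\pi(f))$, and the required spectral equality collapses to the single assertion that $0\in\sigma_{B(L^2(G))}(\pi(f))$, i.e.\ that $\pi(f)$ is not invertible.

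This last point is precisely where the non-discreteness of $G$ is used, and I expect it to be the main (albeit elementary) obstacle, since the non-unital spectrum always contains $0$ whereas the usual spectrum in the unital algebra $B(L^2(G))$ need not. I would prove that $\pi(f)$ is not bounded below. As $G$ is non-discrete, $\{e\}$ is Haar-null, so downward continuity of Haar measure on a fixed compact neighborhood of $e$ provides relatively compact symmetric neighborhoods $U$ of $e$ with $0<\lambda(U)\to0$. Put $\xi_U=\lambda(U)^{-1/2}\,1_U$, so that $\|\xi_U\|_2=1$. Using $f\in L^2(G)$ together with Young's inequality in the form $\|f\ast 1_U\|_2\le\|f\|_2\,\|1_U\|_1=\|f\|_2\,\lambda(U)$, I obtain
$$
\|\pi(f)\xi_U\|_2=\lambda(U)^{-1/2}\,\|f\ast 1_U\|_2\le\lambda(U)^{1/2}\,\|f\|_2\To 0.
$$
Hence $\pi(f)$ is not bounded below, so it is not invertible and $0\in\sigma_{B(L^2(G))}(\pi(f))$. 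Combined with the previous paragraph, this yields the spectral equality for every $(f,t)\in\Au$, and therefore $\Au$ is inverse-closed in $B(L^2(G))$, completing the argument.
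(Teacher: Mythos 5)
Your proof is correct and follows essentially the same route as the paper: realize $\Au$ in $B(L^2(G))$ via $(f,t)\mapsto\pi(f)+tI$, reduce inverse-closedness to the equality $\sigma_{\Au}\bigl((f,t)\bigr)=\sigma_{B(L^2(G))}\bigl(\pi(f)+tI\bigr)$, and obtain it by combining Proposition~\ref{spectral equivalence} with the quasi-inverse closedness of $\A$ from Proposition~\ref{quasi-inverse closedness}. Your one genuine addition is the explicit verification that $\pi(f)$ is never invertible for non-discrete $G$ (so that $0$ lies in the $B(L^2(G))$-spectrum and the non-unital spectrum of $\pi(f)$ really equals the unital one); the paper uses this silently when writing $\sigma_{\A}(f)=\sigma_{B(L^2(G))}(f)$, with the corresponding approximate-identity computation appearing only later in Lemma~\ref{convolution power estimate}, so making it explicit here is a worthwhile refinement (modulo the harmless modular-function factor in the bound $\|f\ast 1_U\|_2\le\|f\|_2\,\lambda(U)$ for non-unimodular $G$).
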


\begin{proof}
We map $\Au$ into $B(L^2(G))$ in a natural way by letting
$$
(f,t)(g)=f\ast g+tg,\quad f\in\Au,\,g\in L^2(G),
$$
which comes from the multiplication of $(f,t)$ and $(g,0)$ in $\Au$. It is clear that since $G$ is non-discrete, this map is injective. Also the continuity of this embedding follows immediately from the continuity of the embedding of $A$ into $B(L^2(G))$ and the definition of norm on $\Au$:
$$
\|(f,t)(g)\|_2=\|f\ast g +tg\|_2\le\|f\ast g\|_2+|t|\|g\|_2\le\|f\|_A\|g\|_2+|t|\|g\|_2=\|(f,t)\|_{\Au}\|g\|_2.
$$
In order to prove the inverse closedness, it is enough to show that $\sigma_{\Au}\bigl(\tilde{f}\bigr)=\sigma_{B(L^2(G))}\bigl(\tilde{f}\bigr)$, $\tilde{f}\in\Au$. Let $\tilde{f}=(f,t)=(f,0)+t(0,1)$ for some $f\in\A$, $t\in\C$. Then since we already know from the proof of Theorem~\ref{quasi-inverse closedness} that $\sigma_{\A}(f)=\sigma_{B(L^2(G))}(f)$ and $(0,1)$ is the unit in $\Au$, we can apply Proposition~\ref{spectral equivalence} to obtain the desired equality of the spectra:
\begin{align}
\sigma_{\Au}\bigl(\tilde{f}\bigr)=t+\sigma_{\Au}\bigl((f,0)\bigr)=t+\sigma_{\A}(f)=t+\sigma_{B(L^2(G))}(f)= \sigma_{B(L^2(G))}\bigl(\tilde{f}\bigr).
\end{align}
\end{proof}

To deal with the norm controlled inversion we will need the following technical generalization of the inequality \eqref{3}.

\begin{lem}\label{pytlik-unitization}
Let $G$, $\om$, $p$, $\A$, and $\Au$ be as in Corollary~\ref{inverse closed-unitization}. Then there is a constant $D>0$ and $0<\theta<1$ such that
\begin{align}\label{inequality for unitized case}
\bigl\|\tilde{f}\ast\tilde{f}\bigr\|_{\Au}\le D \bigl\|\tilde{f}\bigr\|_{\Au}^{1+\theta}\bigl\|\tilde{f}\bigr\|_{2}^{1-\theta},\quad \tilde{f}\in\Au,
\end{align}
where $\bigl\|\tilde{f}\bigr\|_{2}=\|(f,t)\|_2=\sqrt{\|f\|_2^2+|t|^2}$, $f\in\A$, $t\in\C$.
\end{lem}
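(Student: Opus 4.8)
The plan is to reduce everything to the inequality \eqref{3} established in the proof of Proposition~\ref{quasi-inverse closedness}, namely $\|f\ast f\|_{\A}\le M\|f\|_{\A}^{1+\theta}\|f\|_2^{1-\theta}$ for $f\in\A$, where $M=\max\{C,C_2\}$, and then to absorb the extra terms coming from the scalar part of the unitization. Writing $\tilde f=(f,t)$ with $f\in\A$ and $t\in\C$, the multiplication rule of Definition~\ref{unitization} gives $\tilde f\ast\tilde f=(f\ast f+2tf,\,t^2)$, so that by the definition of the norm on $\Au$ together with the triangle inequality in $\A$,
\[
\bigl\|\tilde f\ast\tilde f\bigr\|_{\Au}=\|f\ast f+2tf\|_{\A}+|t|^2\le\|f\ast f\|_{\A}+2|t|\,\|f\|_{\A}+|t|^2.
\]
Applying \eqref{3} to the first summand, I would then be left to show that each of the three resulting terms is dominated by a constant multiple of $\|\tilde f\|_{\Au}^{1+\theta}\|\tilde f\|_2^{1-\theta}$, with the same $\theta$ as in \eqref{3}.

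To this end I would set $A=\|f\|_{\A}$, $B=\|f\|_2$, and $s=|t|$, and record that by the very definition $\|f\|_{\A}=\max\{\|f\|_{p,\om},\|f\|_2\}\ge\|f\|_2$, so that $A\ge B$; also $\|\tilde f\|_{\Au}=A+s$ and $\|\tilde f\|_2=\sqrt{B^2+s^2}$, making the target quantity $(A+s)^{1+\theta}(B^2+s^2)^{(1-\theta)/2}$. The first term $M A^{1+\theta}B^{1-\theta}$ and the last term $s^2=s^{1+\theta}s^{1-\theta}$ are both handled immediately from the elementary bounds $A\le A+s$, $s\le A+s$, $B\le\sqrt{B^2+s^2}$, and $s\le\sqrt{B^2+s^2}$, each contributing a factor bounded by the target (with constants $M$ and $1$ respectively).

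The only term that requires an argument is the cross term $2sA=2|t|\,\|f\|_{\A}$, and this is where the main (though mild) obstacle lies: $A$ can substantially exceed $\|\tilde f\|_2=\sqrt{B^2+s^2}$ when $\|f\|_{p,\om}\gg\|f\|_2$ and $s$ is small, so $sA$ cannot be estimated by simply replacing each factor by its upper bound. I would resolve this by a short case split. If $s\le A$, then $s^\theta\le A^\theta$ yields $sA\le s^{1-\theta}A^{1+\theta}\le(A+s)^{1+\theta}(B^2+s^2)^{(1-\theta)/2}$; if instead $s>A$, then $sA\le s^2=s^{1+\theta}s^{1-\theta}\le(A+s)^{1+\theta}(B^2+s^2)^{(1-\theta)/2}$. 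In either case $sA\le\|\tilde f\|_{\Au}^{1+\theta}\|\tilde f\|_2^{1-\theta}$, so the cross term contributes a factor of at most $2$.

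Collecting the three estimates then gives \eqref{inequality for unitized case} with $D=M+3=\max\{C,C_2\}+3$ and the same exponent $\theta\in(0,1)$ as in \eqref{3}, which completes the proof. I expect the case analysis for the cross term to be the only genuinely non-mechanical step; the remaining bounds are routine consequences of the inequality $A\ge B$ and the monotonicity of $t\mapsto t^{\alpha}$.
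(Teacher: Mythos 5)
Your proof is correct, and up to the final step it follows the paper's reduction exactly: expand $(f,t)\ast(f,t)=(f\ast f+2tf,\,t^2)$, apply the triangle inequality and \eqref{3}, and reduce the lemma to the elementary scalar inequality
\[
M\alpha^{1+\theta}\beta^{1-\theta}+2x\alpha+x^2\;\le\;D\,(\alpha+x)^{1+\theta}\bigl(\beta^2+x^2\bigr)^{(1-\theta)/2},
\qquad \alpha=\|f\|_{\A},\ \beta=\|f\|_2,\ x=|t|.
\]
Where you diverge is in how this inequality is established. The paper first replaces $(\beta^2+x^2)^{1/2}$ by $\tfrac1{\sqrt2}(\beta+x)$, fixes $\alpha,\beta$, and shows that $F_{\alpha,\beta}(x)=\widetilde D(\alpha+x)^{1+\theta}(\beta+x)^{1-\theta}-(\alpha^{1+\theta}\beta^{1-\theta}+2x\alpha+x^2)$ is nonnegative by checking $F_{\alpha,\beta}(0)\ge0$ and $F'_{\alpha,\beta}\ge0$, the latter resting on the uniform lower bound $\inf_{\gamma>0}\bigl((1+\theta)\gamma^{1-\theta}+(1-\theta)\gamma^{-\theta}\bigr)>0$; the constant $\widetilde D$ is therefore only implicit. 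You instead bound the three terms separately, the only nontrivial one being the cross term $2x\alpha$, which you dispatch with the case split $x\le\alpha$ (giving $x\alpha\le x^{1-\theta}\alpha^{1+\theta}$) versus $x>\alpha$ (giving $x\alpha\le x^2$); both cases are then dominated by the target using only $x\le\alpha+x$, $\alpha\le\alpha+x$, and $x,\beta\le(\beta^2+x^2)^{1/2}$. This is more elementary (no calculus, no $\sqrt2$ renormalization of the $\ell^2$-norm of $(f,t)$) and yields the explicit constant $D=\max\{C,C_2\}+3$; the hypothesis $\alpha\ge\beta$ that you record is in fact never used. The paper's calculus argument buys nothing extra here, so your version is a perfectly good, and arguably cleaner, substitute.
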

\begin{proof}
Let $\tilde{f}=(f,t)$, $f\in\A$, $t\in\C$. Using \eqref{3} and denoting $\widetilde{C}=\max\{C,C_2,1\}$, we have
\begin{align}\nonumber
\bigl\|\tilde{f}\ast\tilde{f}\bigr\|_{\Au}&=\bigl\|(f,t)\ast(f,t)\bigr\|_{\Au}=\|f\ast f+2tf\|_{\A}+|t|^2\le \|f\ast f\|_{\A}+2|t|\|f\|_{\A}+|t|^2\\\nonumber &\le \widetilde{C}(\|f\|_{\A}^{1+\theta}\|f\|_2^{1-\theta}+2|t|\|f\|_{\A}+|t|^2).
\end{align}
Then since $\|(f,t)\|_2\ge\frac1{\sqrt2}(\|f\|_2+|t|)$, to prove \eqref{inequality for unitized case} it is enough to show that for some $\widetilde{D}>0$
\begin{align*}
\|f\|_{\A}^{1+\theta}\|f\|_2^{1-\theta}+2|t|\|f\|_{\A}+|t|^2\le\widetilde{D} (\|f\|_{\A}+|t|)^{1+\theta}(\|f\|_2+|t|)^{1-\theta},\quad f\in\A,\, t\in\C.
\end{align*}
If we let $\alpha=\|f\|_{\A}$, $\beta=\|f\|_2$, and $x=|t|$, then it is enough to show that
\begin{align}\label{5}
F_{\alpha,\beta}(x)=\widetilde{D}(\alpha+x)^{1+\theta}(\beta+x)^{1-\theta}-(\alpha^{1+\theta}\beta^{1-\theta}+2x\alpha+x^2)\ge0,\quad \alpha,\beta,x\ge0.
\end{align}
We now fix $\alpha,\beta\ge0$. The above inequality will be satisfied for $x=0$ provided $\widetilde{D}\ge1$. Hence, \eqref{5} will be proved if we could show that there is $\widetilde{D}\ge1$ such that $F'_{\alpha,\beta}(x)\ge0$, $x>0$, for all $\alpha,\beta\ge0$.
\begin{align*}
F'_{\alpha,\beta}(x)&=\widetilde{D}\left((1+\theta)(\alpha+x)^{\theta}(\beta+x)^{1-\theta}+ (1-\theta)(\alpha+x)^{1+\theta}(\beta+x)^{-\theta}\right)-2\alpha-2x\\&=(\alpha+x)\left(\widetilde{D}\left((1+\theta) \left(\frac{\beta+x}{\alpha+x}\right)^{1-\theta}+(1-\theta) \left(\frac{\alpha+x}{\beta+x}\right)^{\theta}\right)-2\right)\\&=(\alpha+x)\left(\widetilde{D}((1+\theta)\gamma^{1-\theta}+ (1-\theta)\gamma^{-\theta})-2\right),\quad\text{where}\ \gamma=\frac{\beta+x}{\alpha+x}.
\end{align*}
Since $\alpha+x>0$ and $\inf_{\gamma>0}\left( (1+\theta)\gamma^{1-\theta}+(1-\theta)\gamma^{-\theta}\right)>0$,
we can choose $\tilde{D}$ large enough so that $F'_{\alpha,\beta}(x)\ge0$, for all $x>0$, $\alpha,\beta\ge0$.
This completes the proof.
\end{proof}


We will also need the following estimate on the norm of convolution powers of elements of $\Au$.

\begin{lem}\label{convolution power estimate}
Let $G$ be a non-discrete locally compact group. Then for every $n\in\N$ we have
\begin{align}\label{cpe}
\|\tilde{f}^{(n)}\|_2\le n\|\tilde{f}\|_{B(L^2(G))}^{n-1}\|\tilde{f}\|_2,\quad \tilde{f}\in\Au,\ A=L^p(G,\omega)\cap L^2(G).
\end{align}
\end{lem}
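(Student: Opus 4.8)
The plan is to reduce the whole statement to one explicit formula for the convolution powers of $\tilde f=(f,t)$ inside $\Au$ and then estimate in the norm $\|\cdot\|_2$. First I would record how $\tilde f$ acts on $L^2(G)$: by the embedding of Corollary~\ref{inverse closed-unitization}, $\tilde f$ corresponds to the operator $T:=L_f+tI\in\bg$, where $L_f$ denotes left convolution by $f$, so that $\|T\|_{\bg}=\|\tilde f\|_{\bg}$ by the very definition of the embedding. Reading the product rule of Definition~\ref{unitization} as a left‑multiplication recursion, $\tilde f^{(n)}=\tilde f\ast\tilde f^{(n-1)}=(f\ast f_{n-1}+tf_{n-1}+t^{n-1}f,\,t^n)$, I would prove by induction that $\tilde f^{(n)}=(f_n,t^n)$ with $f_1=f$ and $f_n=Tf_{n-1}+t^{n-1}f$, and hence that
\[
f_n=\sum_{j=0}^{n-1}t^{j}\,T^{\,n-1-j}f .
\]
Consequently $\|\tilde f^{(n)}\|_2=\big(\|f_n\|_2^2+|t|^{2n}\big)^{1/2}$, and the lemma becomes a pair of estimates, one on $\|f_n\|_2$ and one on $|t|^n$.

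The core estimate is then elementary, \emph{once one knows that $|t|\le\|T\|$}. Granting this, the triangle inequality together with $\|T^{i}f\|_2\le\|T\|^{i}\|f\|_2$ gives $\|f_n\|_2\le\sum_{j=0}^{n-1}|t|^{j}\|T\|^{\,n-1-j}\|f\|_2\le n\,\|T\|^{\,n-1}\|f\|_2$, since each summand is at most $\|T\|^{\,n-1}$; likewise $|t|^{2n}=|t|^2|t|^{2(n-1)}\le\|T\|^{2(n-1)}|t|^2$. Adding the two bounds yields
\[
\|\tilde f^{(n)}\|_2^2\le n^2\|T\|^{2(n-1)}\big(\|f\|_2^2+|t|^2\big)=n^2\,\|\tilde f\|_{\bg}^{2(n-1)}\,\|\tilde f\|_2^2,
\]
which is exactly \eqref{cpe} after taking square roots.

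The one genuine point — and the place where non-discreteness of $G$ is indispensable — is the inequality $|t|\le\|T\|=\|\tilde f\|_{\bg}$. I would obtain it spectrally: it suffices to show $t\in\sigma_{\bg}(L_f+tI)$, equivalently $0\in\sigma_{\bg}(L_f)$, for then $|t|\le\rho_{\bg}(L_f+tI)\le\|\tilde f\|_{\bg}$. Now $f\in\A\subseteq L^1(G)$, so $L_f=\lambda(f)$ lies in $C^*_r(G)$; since $G$ is non-discrete, $C^*_r(G)$ is non‑unital, and by spectral permanence the spectrum of $\lambda(f)$ computed in the unital $C^*$‑algebra $\bg$ coincides with its spectrum in the unitization of $C^*_r(G)$, which necessarily contains $0$. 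Hence $0\in\sigma_{\bg}(L_f)$, as required. I expect this spectral step to be the main obstacle, since it is the only ingredient that fails for discrete $G$ and is precisely what forces the hypothesis that $G$ be non-discrete; the remaining computations above are routine.
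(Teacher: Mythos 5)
Your proof is correct, and it reaches the same two-part structure as the paper's --- an algebraic bound on the powers plus the crucial inequality $|t|\le\|\tilde f\|_{\bg}$ --- but by genuinely different means in both parts. For the powers, the paper runs an induction on $n$ via the splitting $\tilde f^{(m+1)}=\tilde f^{(m)}\ast(f,0)+t\tilde f^{(m)}$, estimating $\|\tilde f^{(m)}\ast(f,0)\|_2\le\|\tilde f^{(m)}\|_{\bg}\|f\|_2$; your closed formula $f_n=\sum_{j=0}^{n-1}t^jT^{n-1-j}f$ is an equivalent but arguably more transparent bookkeeping that makes the factor $n$ visible at a glance. For the key inequality, the paper argues directly: it tests $T=L_f+tI$ against the normalized indicators $g_U=\chi_U/\lambda(U)$, using $\|f\ast g_U\|_2\le\|f\|_2\|g_U\|_1=\|f\|_2$ while $\|g_U\|_2\to\infty$ (this is exactly where non-discreteness enters), to get $\|\tilde f\|_{\bg}\ge|t|$ in two lines. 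You instead derive $|t|\le\rho_{\bg}(T)$ from $0\in\sigma_{\bg}(L_f)$, which you obtain from spectral permanence together with the non-unitality of $C^*_r(G)$ for non-discrete $G$. This is valid, and you correctly identify it as the step that forces the hypothesis on $G$, but note that it imports an external fact stated without proof; its standard verification (that $I$ is not a norm limit of operators $\lambda(f)$, $f\in L^1\cap L^2$) is essentially the same approximate-identity computation the paper performs, so the paper's route is the more self-contained and elementary of the two, while yours buys a cleaner conceptual explanation of why discreteness is the dividing line.
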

\begin{proof}

We first show that if $\tilde{f}=(f,t)$, then $\|\tilde{f}\|_{B(L^2(G))}\ge|t|$. Suppose that  $\left\{g_{U}=\frac{\chi_U}{\lambda(U)}\right\}$ is the standard bounded approximate identity in $L^1(G)$ ($\lambda$ is the Haar measure of $G$). Then, as $U\to \{e\}$, $\|f\ast g_U\|_2\le\|f\|_2\|g_U\|_1\to \|f\|_2$ and $\|g_U\|_2\to \infty$ since $G$ is not discrete. It follows that
\begin{align*}
\|\tilde{f}\|_{B(L^2(G))}\ge\sup_{U}\frac{\|f\ast g_U+tg_U\|_2}{\|g_U\|_2}\ge\lim_{N\to\infty}\frac{N|t|-2\|f\|_2}{N}=|t|,
\end{align*}
and our claim is verified.

We now fix $\tilde{f}=(f,t)\in \Au$ and prove (\ref{cpe}) by induction on $n$. The base for $n=1$ is obvious. Now let the inequality hold for $n=m$. We show that then it also holds for $n=m+1$.
\begin{align*}
\|\tilde{f}^{(m+1)}\|_2&=\|\tilde{f}^{(m)}\ast(f,0)+t\tilde{f}^{(m)}\|_2\le\|\tilde{f}^{(m)}\|_{B(L^2(G))}\|f\|_2+m|t| \|\tilde{f}\|_{B(L^2(G))}^{m-1}\|\tilde{f}\|_2\\ &\le \|\tilde{f}\|^{m}_{B(L^2(G))}\|\tilde{f}\|_2+m\|\tilde{f}\|_{B(L^2(G))}\cdot\|\tilde{f}\|^{m-1}_{B(L^2(G))}\|\tilde{f}\|_2= (m+1)\|\tilde{f}\|^{m}_{B(L^2(G))}\|\tilde{f}\|_2.
\end{align*}
\end{proof}

We are now ready to prove the main result of this section.

\begin{thm}\label{norm control-unitization}
Let $G$ be a non-discrete locally compact group, $\om\ge1$ be a weight on $G$, $1\le p<\infty$, and $q$ be the index conjugate to $p$. Suppose that there exists a bounded function $u:G\to\R^+$,  $s<q$, and $r>0$ such that $\sigma(x)=\om(x)u(x)$, $x\in G$, is bounded away from zero,
\begin{align*}
\frac{\om(xy)}{\om(x)\om(y)}\le u(x)+u(y),\quad x,y\in G,\quad\text{and}
\end{align*}
\begin{align*}
\int\limits_G u(x)^s\om(x)^{r}dx<\infty.
\end{align*}
Further, let $\A=L^p(G,\om)\cap L^2(G)$ and $\Au$ be its unitization. Then $\Au$ is inverse closed in $B(L^2(G))$ and $\Au$ admits a norm-controlled inversion in $B(L^2(G))$.
\end{thm}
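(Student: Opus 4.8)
The plan is to mimic the proof of Proposition~\ref{norm control general}, treating $\bg$ as the ambient $C^*$-algebra. The inverse closedness of $\Au$ in $\bg$ is already supplied by Corollary~\ref{inverse closed-unitization}, so only the norm estimate remains, and for that the substitute for the differential-norm inequality~\eqref{power inequality} is Lemma~\ref{pytlik-unitization}. The essential difference from Proposition~\ref{norm control general} is that the right-hand side of~\eqref{inequality for unitized case} carries the Hilbert-space norm $\|\cdot\|_2$ rather than the operator norm $\|\cdot\|_{\bg}$; I would bridge this gap with Lemma~\ref{convolution power estimate}, which controls $\|\tilde f^{(n)}\|_2$ by powers of $\|\tilde f\|_{\bg}$.

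Concretely, fix $\tilde c\in\Au$ with $\|\tilde c\|_{\bg}<1$. Applying Lemma~\ref{pytlik-unitization} to $\tilde c^{(2^{k-1})}$ gives $\|\tilde c^{(2^k)}\|_{\Au}\le D\|\tilde c^{(2^{k-1})}\|_{\Au}^{1+\theta}\|\tilde c^{(2^{k-1})}\|_2^{1-\theta}$, and Lemma~\ref{convolution power estimate} lets me replace $\|\tilde c^{(2^{k-1})}\|_2$ by $2^{k-1}\|\tilde c\|_{\bg}^{2^{k-1}-1}\|\tilde c\|_2$. Writing the resulting recursion in logarithmic form and iterating (exactly as the passage from~\eqref{eq:estimate} to~\eqref{series estimate}) I expect a closed bound for $\|\tilde c^{(2^k)}\|_{\Au}$, and then the dyadic-expansion trick $n=\sum_k\eps_k2^k$ converts $\sum_{n}\|\tilde c^{(n)}\|_{\Au}$ into an infinite product $\prod_k\bigl(1+\|\tilde c^{(2^k)}\|_{\Au}\bigr)$. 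The point is that because $\|\tilde c\|_{\bg}<1$, the factor $\|\tilde c\|_{\bg}^{2^{k-1}}$ drives $\|\tilde c^{(2^k)}\|_{\Au}$ to zero geometrically in $2^k$, so the product converges and $\sum_n\|\tilde c^{(n)}\|_{\Au}<\infty$.

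The reduction to a genuine inverse proceeds essentially as in Proposition~\ref{norm control general}. Given $\tilde a\in\Au$ invertible in $\bg$, set $\tilde b=\tilde a^*\tilde a/\|\tilde a^*\tilde a\|_{\bg}$ and $\tilde c=e-\tilde b$ with $e=(0,1)$; positivity of $\tilde a^*\tilde a$ and the $C^*$-identity in $\bg$ give $\|\tilde c\|_{\bg}=1-\bigl(\|\tilde a^{-1}\|_{\bg}^2\|\tilde a\|_{\bg}^2\bigr)^{-1}<1$ together with the Neumann series $\tilde b^{-1}=\sum_n\tilde c^{(n)}$, which converges in $\Au$ by the previous paragraph and by inverse closedness. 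From $\tilde a^{-1}=\tilde b^{-1}\tilde a^*/\|\tilde a^*\tilde a\|_{\bg}$ I then bound $\|\tilde a^{-1}\|_{\Au}$ by $\bigl(\|\tilde a\|_{\Au}/\|\tilde a\|_{\bg}^2\bigr)\sum_n\|\tilde c^{(n)}\|_{\Au}$. It remains to estimate the three quantities entering the product, namely $\|\tilde c\|_{\Au}\le 2\|\tilde a\|_{\Au}^2/\|\tilde a\|_{\bg}^2$, $\|\tilde c\|_2\le 1+\|\tilde a\|_{\Au}/\|\tilde a\|_{\bg}$, and $\|\tilde c\|_{\bg}$ as above, and then to use $1/\|\tilde a^{-1}\|_{\bg}\le\|\tilde a\|_{\bg}\le\|\tilde a\|_{\Au}$ to rewrite the whole bound as an explicit function $h\bigl(\|\tilde a\|_{\Au},\|\tilde a^{-1}\|_{\bg}\bigr)$, which is exactly the norm-controlled inversion claimed.

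The hard part will be the recursion analysis in the second paragraph: unlike in Proposition~\ref{norm control general}, the base of the iteration involves the pair $(\|\tilde c\|_2,\|\tilde c\|_{\bg})$ and an extra polynomial factor $2^{k-1}$ coming from Lemma~\ref{convolution power estimate}. I would handle this by noting that $2>1+\theta$, so in the telescoped sum $\sum_{j}(1+\theta)^j\cdot(\text{forcing term at level }k-j)$ the geometric contribution $(1-\theta)2^{k-1}\ln\|\tilde c\|_{\bg}$ dominates and accumulates to order $2^k\ln\|\tilde c\|_{\bg}$, while the $\ln 2^{k-1}$ and constant terms contribute only lower-order, summable corrections. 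Verifying that these corrections do not destroy convergence of the product, and tracking the constants so that the final estimate genuinely depends on $\|\tilde a\|_{\Au}$ and $\|\tilde a^{-1}\|_{\bg}$ alone, is the only place where real care beyond Proposition~\ref{norm control general} is needed.
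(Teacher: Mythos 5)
Your proposal is correct and follows essentially the same route as the paper: it invokes Corollary~\ref{inverse closed-unitization} for inverse closedness, combines Lemma~\ref{pytlik-unitization} applied to $\tilde c^{(2^{k-1})}$ with Lemma~\ref{convolution power estimate} to obtain the recursion for $\|\tilde c^{(2^k)}\|_{\Au}$, iterates it (absorbing the polynomial factor $2^{k-1}$, which is harmless since $2>1+\theta$), and then runs the dyadic-expansion and Neumann-series argument of Proposition~\ref{norm control general}. The only cosmetic difference is that the paper eliminates the dependence on $\|\tilde g\|_2$ via $\|\tilde g\|_2\le\|\tilde g\|_{\Au}$ rather than estimating $\|\tilde c\|_2$ separately, which does not affect correctness.
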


\begin{proof}
We already know that $\Au$ is inverse closed in $B(L^2(G))$ as a result of Corollary~\ref{inverse closed-unitization}, so it only remains to prove the existence of a norm control in the inversion. We are going to do that by following the lines of the proof of Proposition ~\ref{norm control general} and making technical adjustments where it is necessary. Let $\tilde{g}\in\Au$ and $k\in\N$. Combining Lemma~\ref{pytlik-unitization} for $\tilde{f}=\tilde{g}^{(2^{k-1})}$ with Lemma~\ref{convolution power estimate} for $n=2^{k-1}$,  we obtain
\begin{align}\label{eq:non-unital inequality}
\bigl\|\tilde{g}^{(2^k)}\bigr\|_{\Au}\le 2^{(k-1)(1-\theta)}D \bigl\|\tilde{g}^{(2^{k-1})} \bigr\|_{\Au}^{1+\theta}\bigl\|\tilde{g}\bigr\|_{B(L^2(G))}^{(2^{k-1}-1)(1-\theta)}\|\tilde{g}\|_2^{1-\theta}.
\end{align}

We now use \eqref{eq:non-unital inequality} instead of \eqref{eq:estimate} to proceed. Let $\beta_n=\frac{\|\tilde{g}^{(n)}\|_{\Au}}{\|\tilde{g}\|^n_{B(L^2(G))}}$.  Then \eqref{eq:non-unital inequality} implies that
\begin{align*}
\beta_{2^k}\le\frac{D\|\tilde{g}\|_2^{1-\theta}}{\|\tilde{g}\|_{\bg}^{1-\theta}}\,2^{(k-1)(1-\theta)}\beta_{2^{k-1}}^{1+\theta}, \quad k\in\N.
\end{align*}
Using induction in $k$, it can be easily shown that
\begin{align}\label{eq:preliminary}
\beta_{2^k}\le\left(\frac{D\|\tilde{g}\|_2^{1-\theta}}{\|\tilde{g}\|_{\bg}^{1-\theta}}\right)^{\frac{(1+\theta)^k-1}{\theta}} 2^{(1-\theta)S_{k}(1+\theta)} \beta_1^{(1+\theta)^k}, \quad k\in\N,
\end{align}
where
$$
S_m(x)=m-1+xS_{m-1}(x),\quad m\in\N,\ \  \text{and}\ \  S_1(x)=0.
$$
It follows that $S_k(x)=\sum_{j=0}^{k-2} (k-1-j)x^j$, and a standard computation yields that
$$
S_k(x)=\frac{x^k-kx+(k-1)}{(x-1)^2},\quad k\in\N.
$$
Hence, \eqref{eq:preliminary} becomes
\begin{align*}
\beta_{2^k}\le\left(\frac{D\|\tilde{g}\|_2^{1-\theta}}{\|\tilde{g}\|_{\bg}^{1-\theta}}\right)^{\frac{(1+\theta)^k-1}{\theta}} 2^{\frac{(1-\theta)((1+\theta)^k-k\theta-1)}{\theta^2}} \beta_1^{(1+\theta)^k}, \quad k\in\N.
\end{align*}
Using the definition of $\beta_n$ and the fact that $\|\tilde{g}\|_2\le\|\tilde{g}\|_{\Au}$, we obtain

\begin{align}\label{eq:power estimate}
\|\tilde{g}^{(2^k)}\|_{\Au}&\le \|\tilde{g}\|_{\bg}^{2^k} \left(\frac{D\|\tilde{g}\|_2^{1-\theta}}{\|\tilde{g}\|_{\bg}^{1-\theta}}\right)^{\frac{(1+\theta)^k-1}{\theta}} 2^{\frac{(1-\theta)((1+\theta)^k-k\theta-1)}{\theta^2}}\left(\frac{\|\tilde{g}\|_{\Au}}{\|\tilde{g}\|_{\bg}}\right)^{(1+\theta)^k}\\ \nonumber &\le D^{\frac{(1+\theta)^k-1}{\theta}}2^{\frac{(1-\theta)((1+\theta)^k-k\theta-1)}{\theta^2}} \|\tilde{g}\|_{\Au}^{\frac{(1+\theta)^k-(1-\theta)}{\theta}} \|\tilde{g}\|_{\bg}^{2^k-\frac{(1+\theta)^k-(1-\theta)}{\theta}},\quad k\in\N.
\end{align}
Repeating the argument with the dyadic expansion of $n$ given in the proof of Proposition ~\ref{norm control general}, we will have that
\begin{align*}
\sum\limits_{n=0}^{\infty} \|\tilde{g}^{(n)}\|_{\Au}&\le \prod\limits_{k=0}^{\infty}\left(1+D^{\frac{(1+\theta)^k-1}{\theta}}2^{\frac{(1-\theta)((1+\theta)^k-k\theta-1)}{\theta^2}} \|\tilde{g}\|_{\Au}^{\frac{(1+\theta)^k-(1-\theta)}{\theta}} \|\tilde{g}\|_{\bg}^{2^k-\frac{(1+\theta)^k-(1-\theta)}{\theta}}\right).
\end{align*}
This infinite product is convergent if and only if
$$
\sum_{k=0}^{\infty} D^{\frac{(1+\theta)^k-1}{\theta}}2^{\frac{(1-\theta)((1+\theta)^k-k\theta-1)}{\theta^2}} \|\tilde{g}\|_{\Au}^{\frac{(1+\theta)^k-(1-\theta)}{\theta}} \|\tilde{g}\|_{\bg}^{2^k-\frac{(1+\theta)^k-(1-\theta)}{\theta}}<\infty,
$$
and since $\theta<1$, this happens exactly when $\|\tilde{g}\|_{\bg}<1$.

Now the rest of the proof including the calculation of the norm controlling function follows from the corresponding part of the proof of Proposition ~\ref{norm control general} since $\bg$ is a $C^*$-algebra.
\end{proof}

We finish this section with the following corollary which provides many cases when norm-controlled inversion happens for non-discrete groups. We omit the proof as it is very similar to the one presented in Corollary \ref{C:norm control general-discrete-examples}.

\begin{cor}\label{C:norm control general-NON discrete-examples}
Let $G$ be a non-discrete locally compact group, $\om\ge1$ be a weight on $G$, $1\le p<\infty$, and $q$ be the index conjugate to $p$. Let $\A=L^p(G,\om)\cap L^2(G)$ and $\Au$ be its unitization. Then $\Au$ admits a norm-controlled inversion in $B(L^2(G))$ in either of the following cases:\\
$(i)$ $G$ and $\om$ are as in Example \ref{E:locally finite-diff Lp norm relation} with $\om^{-1}\in L^s(G)$ for some $0<s<q$;\\
$(ii)$ $G$ is a compactly generated group of polynomial growth and $\om:=\om_\beta$ is the weight \eqref{Eq:poly weight-defn} with $\beta>d(G)/q$;\\
$(iii)$ $G$ is a compactly generated group of polynomial growth and $\om:=\om_{\alpha,C}$ is the weight \eqref{Eq:Expo weight-defn};\\
$(iv)$ $G$ is a compactly generated group of intermediate growth whose growth is bounded by $e^{n^{\alpha_0}}$ and $\om:=\om_{\alpha,C}$ is the weight \eqref{Eq:Expo weight-defn} with $0<\alpha_0<\alpha<1$.
\end{cor}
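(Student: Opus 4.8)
The plan is to obtain each of the four statements by a single application of Theorem~\ref{norm control-unitization}. Its hypotheses are precisely the existence of a bounded $u\colon G\to\R^+$, an exponent $s<q$, and $r>0$ for which $\sigma=\om u$ is bounded away from zero, the algebra condition \eqref{algebra condition} holds, and $\int_G u^s\om^r\,d\lambda<\infty$; these coincide verbatim with the hypotheses of Theorem~\ref{T:diff norm-weighted Lp}. Consequently the verification in each case is word-for-word the one already carried out for discrete groups in the proof of Corollary~\ref{C:norm control general-discrete-examples}, the only change being that the counting measure is everywhere replaced by the Haar measure $\lambda$ of $G$ and sums by integrals. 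Thus the whole argument reduces to producing a suitable $u$ in each case and checking the integrability condition, after which Theorem~\ref{norm control-unitization} yields the norm-controlled inversion of $\Au$ in $\bg$.

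For case (i), Example~\ref{E:locally finite-diff Lp norm relation} shows that the weight \eqref{Eq:weight-locally finite} is weakly subadditive, so I would take $u=D/\om$; exactly as in the paragraph preceding Corollary~\ref{norm control weakly subadditive}, the condition $\int_G u^s\om^r\,d\lambda<\infty$ then amounts to $\om^{-1}\in L^{s}(G)$ for some $s<q$, which is assumed. For cases (ii) and (iii), I would take $u$ as in \eqref{Eq:subexpo weight decreasing quotient-2}; by Remark~\ref{importance of rho condition}(ii) both conditions in \eqref{rho: growth condition} are satisfied by the polynomial weights \eqref{Eq:poly weight-defn} with $\beta>d(G)/q$ and by all subexponential weights \eqref{Eq:Expo weight-defn}, and the proof of Theorem~\ref{T:diff weighted LP relation-non-weakly subadditive} then supplies $\tilde s<q$ and $r>0$ with $\int_G u^{\tilde s}\om^{r}\,d\lambda<\infty$, together with \eqref{algebra condition} and $\sigma\ge1$. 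In all three cases the hypotheses of Theorem~\ref{norm control-unitization} are met.

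The only case that calls for a genuine computation is (iv). Here $\rho(n)=Cn^\alpha$, so $u(x)=e^{-C(2-2^\alpha)\tau(x)^\alpha}$, and I would verify $u\in L^s(G)$ for every $s>0$ by partitioning $G$ into the layers $U^n\setminus U^{n-1}=\{x:\tau(x)=n\}$, on each of which $u$ is constant. The intermediate-growth bound $\lambda(U^n)\le e^{n^{\alpha_0}}$ then gives
\[
\int_G u(x)^s\,d\lambda(x)\le\sum_{n=1}^{\infty}\lambda(U^n\setminus U^{n-1})\,e^{-sC(2-2^\alpha)n^\alpha}\le\sum_{n=1}^{\infty}e^{\,n^{\alpha_0}-sC(2-2^\alpha)n^\alpha}<\infty,
\]
which converges because $0<\alpha_0<\alpha<1$. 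This is the exact analogue of the series in the proof of Corollary~\ref{C:norm control general-discrete-examples}(iv), and Theorem~\ref{norm control-unitization} again delivers the conclusion. I expect the main (indeed only) obstacle to be the bookkeeping of this measure-theoretic translation: one must note that in the non-discrete setting $\{e\}$ has Haar measure zero, so the $n=0$ layer contributes nothing, and it is the layer measures $\lambda(U^n\setminus U^{n-1})\le\lambda(U^n)$, rather than cardinalities, that feed the series above. No step of real substance is added beyond the discrete case, which is why the authors omit the proof.
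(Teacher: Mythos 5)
Your proposal is correct and follows exactly the route the authors intend: each case is reduced to Theorem~\ref{norm control-unitization} by producing the same auxiliary function $u$ as in the discrete setting (via Corollary~\ref{norm control weakly subadditive} for (i) and Theorem~\ref{T:diff weighted LP relation-non-weakly subadditive} with Remark~\ref{importance of rho condition}(ii) for (ii)--(iv)), with counting measure replaced by Haar measure; this is precisely why the paper omits the proof as ``very similar'' to that of Corollary~\ref{C:norm control general-discrete-examples}. Your explicit layer-by-layer integral estimate in case (iv) is the correct non-discrete analogue of the series computation there.
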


\section{Appendix}

Here we present the proof of Proposition \ref{formula}. As it was shown in Proposition \ref{norm control general},
$$
\|a^{-1}\|_{\A}\le\frac{\|a\|_{\A}}{\|a\|_{\B}^2}\,\prod_{k=0}^{\infty}\left(1+\left(2\frac{\|a\|_{\A}^2}{\|a\|_{\B}^2}\right)^ {(1+\theta)^k}C^{\frac{(1+\theta)^k-1}{\theta}}\left(1-\frac{1}{\|a\|_{\B}^2\|a^{-1}\|_{\B}^2}\right)^{2^k-(1+\theta)^k}\right).
$$
As in \cite{GK}, we note that because $\A\subset\B$ and $\B$ is a C$^*$-algebra, we have that $\|a\|_{\B}\le\|a\|_{\A}$ and $\|a\|_{\B}^{-1}\le\|a^{-1}\|_{\B}$. Therefore, $\|a\|_{\B}\|a^{-1}\|_{\B}\le\|a\|_{\A}\|a^{-1}\|_{\B}=\nu(a)$ and $\|a\|_{\A}/\|a\|_{\B}\le\nu(a)$, implying that
\begin{align}\label{0}
\|a^{-1}\|_{\A}\le\frac{\|a\|_{\A}}{\|a\|_{\B}^2}\,\prod_{k=0}^{\infty}\left(1+\left(2\nu^2(a)\right)^ {(1+\theta)^k}C^{\frac{(1+\theta)^k-1}{\theta}}\left(1-\frac{1}{\nu^2(a)}\right)^{2^k-(1+\theta)^k}\right).
\end{align}
The infinite product on the right can be rewritten as
\begin{align}\label{6}
\prod_{k=0}^{\infty}\left(1+C^{-1/\theta}u^{(1+\theta)^k}v^{2^k}\right)
\end{align}
where $v=1-\frac{1}{\nu^2(a)}$, $u=2C^{1/\theta}v^{-1}(1-v)^{-1}$, and our goal will be to estimate it. Since $\nu(a)\ge1$ and $C\ge1$, we have that $0<v<1$ and $u\ge\frac2{v(1-v)}\ge8>1$.
Let
$$
f(k)=\ln \left(u^{(1+\theta)^k}v^{2^k}\right)=(1+\theta)^k\ln u+2^k\ln v, \quad k\in\R^+.
$$
Using differentiation, we find that $f$ attains its maximum at $k_m=\log_{2/({1+\theta})}{\left(\frac{\ln{(1+\theta)}\cdot\ln u}{\ln2\cdot \ln(v^{-1})}\right)}$. Hence, any term in the infinite product \eqref{6} does not exceed
\begin{align}\label{7}
1+C^{-1/\theta}u^{(1+\theta)^{k_m}}v^{2^{k_m}}=1+C^{-1/\theta}u^{\left(\frac{\ln{(1+\theta)}\cdot\ln u}{\ln2\cdot \ln(v^{-1})}\right)^{\log_{2/({1+\theta})}(1+\theta)}}v^{\left(\frac{\ln{(1+\theta)}\cdot\ln u}{\ln2\cdot \ln(v^{-1})}\right)^{\log_{2/({1+\theta})}2}}.
\end{align}
Let
$$
\gamma=\frac{\ln(1+\theta)}{\ln 2}=\log_2(1+\theta).
$$
Then since $0<\theta<1$, we have that $0<\gamma<1$. It is also easy to see that $\log_{2/({1+\theta})}2=\frac{1}{1-\gamma}$ and $\log_{2/({1+\theta})}(1+\theta)=\frac{\gamma}{1-\gamma}$. This allows us to rewrite the expression in \eqref{7} as follows:
\begin{align}\nonumber
&1+C^{-1/\theta}e^{\ln u\cdot\left(\gamma\frac{\ln u}{\ln(v^{-1})}\right)^\frac{\gamma}{1-\gamma}-\ln {(v^{-1})}\cdot\left(\gamma\frac{\ln u}{\ln(v^{-1})}\right)^\frac{1}{1-\gamma}}\\\label{8}&=1+C^{-1/\theta}e^{(1-\gamma)\gamma^{\frac{\gamma}{1-\gamma}}(\ln u)^\frac{1}{1-\gamma}(\ln {(v^{-1}}))^\frac{-\gamma}{1-\gamma}}=1+C^{-1/\theta} e^{\tilde{C}(\ln u)^\frac{1}{1-\gamma}(\ln {(v^{-1}}))^\frac{-\gamma}{1-\gamma}}.
\end{align}

To estimate the infinite product in \eqref{6}, we split it into two parts: a finite part from $0$ to some $N$ and an infinite part from $N+1$ to infinity. To choose $N$, we first find $k_0\in\R^+$ such that $u^{(1+\theta)^{k_0}}v^{2^{k_0}}=1$, i.e. such that $f(k_0)=0$:
$$
k_0=\log_{2/({1+\theta})}{\left(\frac{\ln u}{\ln(v^{-1})}\right)},
$$
 and then take $N=\lfloor k_0\rfloor$, where $\lfloor\cdot\rfloor$ is a floor function. We now estimate the infinite part of~\eqref{6}. Since $C\ge1$, we have that

\begin{align}\label{9}
\prod_{k=N+1}^{\infty}\left(1+C^{-1/\theta}u^{(1+\theta)^k}v^{2^k}\right)\le \prod_{k=N+1}^{\infty}\left(1+u^{(1+\theta)^k}v^{2^k}\right)\le e^{\,\sum_{k=N+1}^{\infty}u^{(1+\theta)^k}v^{2^k}}.
\end{align}

Let $a_k=u^{(1+\theta)^k}v^{2^k}$ and $\alpha_k=\frac{a_{k+1}}{a_k}=u^{\theta(1+\theta)^k}v^{2^k}$. Then by the choice of $k_0$ and $N$ we have that $a_{k}<1$, $k\ge N+1$. Also, since $u>1$ and $0<\theta<1$,
$$
\frac{\alpha_{k+1}}{\alpha_k}=u^{\theta^2(1+\theta)^k}v^{2^k}<u^{(1+\theta)^k}v^{2^k}=a_k<1,\quad k\ge N+1,
$$
which implies that $a_{N+1+m}\le\alpha_{N+1}^m$, $m\in\N\cup\{0\}$. It follows that
\begin{align}\label{10}
\sum_{k=N+1}^{\infty}u^{(1+\theta)^k}v^{2^k}=\sum_{m=0}^{\infty} a_{N+1+m}\le\frac1{1-\alpha_{N+1}}.
\end{align}
By the choice of $k_0$ and $N$ and using that $u\ge8$, we obtain
$$
\alpha_{N+1}\le\alpha_{k_0}=u^{\theta(1+\theta)^{k_0}}v^{2^{k_0}}=u^{-(1-\theta)(1+\theta)^{k_0}}\le u ^{-(1-\theta)}\le8^{-(1-\theta)}.
$$
Combining the last estimate with \eqref{9} and \eqref{10}, we see that the infinite part of \eqref{6} doesn't exceed a constant $e^{\frac1{1-8^{-(1-\theta)}}}$.

For the finite part of \eqref{6} we have an obvious estimate that follows from \eqref{7} and \eqref{8}:
$$
\prod_{k=0}^{N}\left(1+C^{-1/\theta}u^{(1+\theta)^k}v^{2^k}\right)\le \left(1+C^{-1/\theta} e^{\tilde{C}(\ln u)^\frac{1}{1-\gamma}(\ln {(v^{-1}}))^\frac{-\gamma}{1-\gamma}}\right)^{N+1}.
$$
Since $u\ge8$, $u=2C^{1/\theta}v^{-1}(1-v)^{-1}\ge v^{-1}$, and $N\le k_0$, there exists a constant $C_1$ such that
\begin{align}\nonumber
\prod_{k=0}^{\infty}\left(1+C^{-1/\theta}u^{(1+\theta)^k}v^{2^k}\right)&\le C_1e^{\tilde{C}(\ln u)^\frac{1}{1-\gamma}(\ln {(v^{-1}}))^\frac{-\gamma}{1-\gamma}\cdot{\left(\log_{2/({1+\theta})}{\left(\frac{\ln u}{\ln(v^{-1})}\right)}+1\right)}}\\\label{11}&=C_1e^{\tilde{C}\left(\frac{\ln u}{\ln(v^{-1})}\right)^{\frac1{1-\gamma}}\cdot(\ln {(v^{-1}}))\cdot{\left(\log_{2/({1+\theta})}{\left(\frac{\ln u}{\ln(v^{-1})}\right)}+1\right)}}.
\end{align}

Recalling that $v=1-\frac{1}{\nu^2(a)}$, we obtain
$$
\ln(v^{-1})=\ln\left(1+\frac1{\nu^2(a)-1}\right)\le\frac1{\nu^2(a)-1}\le\frac4{3\nu^2(a)},\quad\text{if}\ \nu(a)\ge2.
$$
We also have that
\begin{align*}
\frac{\ln u}{\ln(v^{-1})}=\frac{\ln(2C^{1/\theta}v^{-1}(1-v)^{-1})}{\ln(v^{-1})}= 1+\frac{\ln(2C^{1/\theta}\nu^2(a))}{\ln\left(1+\frac1{\nu^2(a)-1}\right)}\le \hat{C}\ln(\nu(a))\nu^2(a),\quad\text{if}\ \nu(a)\ge2,
\end{align*}
for some constant $\hat{C}>0$. Combining the above estimates with \eqref{11}, we see that there exists a constant $C_2>0$ such that
\begin{align*}
\prod_{k=0}^{\infty}\left(1+C^{-1/\theta}u^{(1+\theta)^k}v^{2^k}\right)\le C_1e^{C_2(\nu(a))^{\frac{2\gamma}{1-\gamma}}\cdot(\ln {(\nu(a))})^{\frac{2-\gamma}{1-\gamma}}},\quad\text{if}\ \nu(a)\ge2.
\end{align*}
Finally, we combine this with \eqref{0}, and the proof of Proposition \ref{formula} is complete.

\end{document}